\newtheorem{deft}{D\'efinition}
\newtheorem{theorem}{Theorem}[section]
\newtheorem{remark}[theorem]{Remark}
\newtheorem{example}[theorem]{Example}
\numberwithin{equation}{section}
\font \sevenrm=cmr7
\font \fiverm=cmr5
\newcommand{\smop}[1]{\mathop{\hbox {\sevenrm #1} }\nolimits}
\newcommand{\ssmop}[1]{\mathop{\hbox {\fiverm #1} }\nolimits}
\def \restr#1{\mathstrut_{\textstyle |}\raise-6pt\hbox{$\scriptstyle #1$}}
\def \srestr#1{\mathstrut_{\scriptstyle |}\hbox to
-1.5pt{}\raise-4pt\hbox{$\hskip 1pt\scriptscriptstyle #1$}}
\def\shuffle{\sqcup\mathchoice{\mkern-7mu}{\mkern-7mu}{\mkern-3.2mu}{\mkern-3.8mu}\sqcup}
\def\L1#1{L^1(#1)}
\def\L#1#2{L^{#1}(#2)}
\def\lef({\left(}
\def\rig){\right)}
\newcommand{\tp}{\mathbf{p}} 
\newcommand{\tq}{\mathbf{q}}
\newcommand{\mop}[1]{\mathop{\hbox {\rm #1} }}
\newcommand{\dm}[1]{\textcolor{blue}{D: #1}}
\newcommand{\dmr}[1]{\textcolor{red}{D: #1}}
\def\entoure #1{\scalebox{0.8}{
  \begin{picture}(12,10) (0,0)
    \SetWidth{1.0}
    \SetColor{Black}
    \Arc(6,3)(6,90,450)
    \Text(3,0.5)[lb]{\Black{$#1$}}
  \end{picture}}
}
\def\stick #1#2{\scalebox{0.8}{
  \begin{picture}(12,15) (0,0)
    \SetWidth{1.0}
    \SetColor{Black}
    \Arc(6,-8)(6,90,450)
    \Text(2.5,-11.5)[lb]{\Black{$#1$}}
    \Arc(6,12)(6,90,450)
    \Text(2.5,8.5)[lb]{{\Black{$#2$}}}
    \Line(6,6)(6,-2)
  \end{picture}}
}
\def\ladder #1#2#3{\scalebox{0.8}{
  \begin{picture}(12,15) (0,0)
    \SetWidth{1.0}
    \SetColor{Black}
    \Arc(6,-8)(6,90,450)
    \Text(2.5,-11.5)[lb]{\Black{$#1$}}
    \Arc(6,12)(6,90,450)
    \Text(2.5,8.5)[lb]{{\Black{$#2$}}}
    \Arc(6,32)(6,90,450)
    \Text(2.5,28.5)[lb]{{\Black{$#3$}}}
    \Line(6,6)(6,-2)
    \Line(6,26)(6,18)
  \end{picture}}
}
\def\cherry #1#2#3{\scalebox{0.8}{
  \begin{picture}(12,20) (0,0)
    \SetWidth{1.0}
    \SetColor{Black}
    \Arc(6,-8)(6,90,450)
    \Text(2.5,-11.5)[lb]{\Black{$#1$}}
    \Arc(-6,12)(6,90,450)
    \Text(-10,8.5)[lb]{{\Black{$#2$}}}
    \Line(-4,7)(3,-2)
    \Arc(18,12)(6,90,450)
    \Text(15,8.5)[lb]{{\Black{$#3$}}}
    \Line(16,7)(9,-2)
  \end{picture}}
}
\def\igrec #1#2#3#4{\scalebox{0.8}{
  \begin{picture}(12,0) (0,-20)
    \SetWidth{1.0}
    \SetColor{Black}
    \Arc(6,-28)(6,90,450)
    \Line(6,-22)(6,-14)
    \Text(2.5,-31.5)[lb]{\Black{$#1$}}
    \Arc(6,-8)(6,90,450)
    \Text(2.5,-11.5)[lb]{\Black{$#2$}}
    \Arc(-6,12)(6,90,450)
    \Text(-10,8.5)[lb]{{\Black{$#3$}}}
    \Line(-4,7)(3,-2)
    \Arc(18,12)(6,90,450)
    \Text(15,8.5)[lb]{{\Black{$#4$}}}
    \Line(16,7)(9,-2)
  \end{picture}}
}
\def\treeone #1#2#3#4#5#6{\scalebox{0.8}{
  \begin{picture}(12,0) (0,-20)
    \SetWidth{1.0}
    \SetColor{Black}
    \Arc(6,-28)(6,90,450)
    \Line(6,-22)(6,-14)
    \Text(2.5,-31.5)[lb]{\Black{$#1$}}
    \Arc(6,-8)(6,90,450)
    \Text(2.5,-11.5)[lb]{\Black{$#2$}}
    \Arc(-9,12)(6,90,450)
    \Text(-13,8.5)[lb]{{\Black{$#3$}}}
    \Line(-7,7)(3,-2)
    \Arc(21,12)(6,90,450)
    \Text(18,8.5)[lb]{{\Black{$#4$}}}
    \Line(19,7)(9,-2)
    \Arc(6,12)(6,90,450)
    \Text(2.5,8.5)[lb]{{\Black{$#5$}}}
    \Line(6,6)(6,-2)
    \Arc(6,32)(6,90,450)
    \Text(2.5,28.5)[lb]{{\Black{$#6$}}}
    \Line(6,26)(6,18)
  \end{picture}}
}
\def\treeones #1#2#3#4#5#6{\scalebox{0.8}{
  \begin{picture}(12,0) (0,-20)
    \SetWidth{1.0}
    \SetColor{Black}
    \Arc(6,-28)(6,90,450)
    \Line(6,-22)(6,-14)
    \Text(2.5,-31.5)[lb]{\Black{$#1$}}
    \Arc(6,-8)(6,90,450)
    \Text(2.5,-11.5)[lb]{\Black{$#2$}}
    \Arc(-9,12)(6,90,450)
    \Text(-13,8.5)[lb]{{\Black{$#3$}}}
    \Line(-7,7)(3,-2)
    \Arc(-9,32)(6,90,450)
    \Text(-13,28.5)[lb]{{\Black{$#6$}}}
    \Line (-9,26)(-9,18)
    \Arc(21,12)(6,90,450)
    \Text(18,8.5)[lb]{{\Black{$#4$}}}
    \Line(19,7)(9,-2)
    \Arc(6,12)(6,90,450)
    \Text(2.5,8.5)[lb]{{\Black{$#5$}}}
    \Line(6,6)(6,-2)
  \end{picture}}
}
\begin{document}
 \title{ Operad structures on the species composition of two operads}
 \author{Imen Rjaiba}
 \address{} \email{}
 \address{} \email{}
 \maketitle{}
  \begin{abstract}
     We give an explicit description of three operad structures on the species composition $\tp \circ \tq$, where $\tq$ is any given positive operad, and where $\tp$  is the $\mop{NAP}$ operad, or a shuffle version of the magmatic operad $\mop{\sc Mag}$. No distributive law between $\tp$ and $\tq$ is assumed. 
   \keywords{ }
  \end{abstract}
  \noindent \textbf{Keywords:} $\mop{NAP}$, $\mop{\sc Pre-Lie}$, $\mop{\sc Mag}$, operads.

  \noindent \textbf{MSC Classification:} 18M80
  \section{\bf{Introduction.}}\label{sec:1}
  The notion of operad was introduced by J. P. May in 1972 in the context of algebraic topology to understand spaces of iterated loops \cite{M1972}. Operads and their reformulations in purely algebraic frameworks \cite{S1986, GJ1994, Lo1996, Ma1996} provoked a revival of interest in the Nineties of last century. A. Joyal in \cite{J1981, J1986} gave a very important formulation of operads in the species formalism, which has been developed by many others such as F. Bergeron, G. Labelle and P. Leroux \cite{BLL1998}, M. M\'endez \cite{M2015} as well as M. Aguiar and S. Mahajan \cite{AM2010, AM2020}, and others.

  There are two operad structures for non-planar rooted tree species: in 2001, F. Chapoton and M. Livernet introduced the notion of $\mop{\sc Pre-Lie}$ operad, then M. Livernet defined in 2006 the $\mop{NAP}$ operad. The free operad generated by one single binary operation is called the magmatic operad, and is traditionnally described in terms of planar binary trees with labelled leaves. In 2012, K. Ebrahimi-Fard and D. Manchon \cite{ED2014} used D. Knuth's rotation correspondence (see \cite{Knuth}) in order to describe the magmatic operad in the planar rooted tree formalism. The present article focuses mainly on these three operads.\\

  This paper is organized as follows: the second section is devoted to introductory background
  on species and operads. The third section is a reminder of  rooted tree notion and of $\mop{NAP}$ and $\mop{\sc Pre-Lie}$ operads. 

The fourth section is devoted to the construction of explicit partial compositions $\square_s$ providing an operad structure on the species composition $\mop{NAP} \circ \, \tq$. The fifth section contains three subsections: in \ref{sec 6.1} we remind the notion of planar binary tree and Knuth’s rotation correspondence between planar binary trees and planar rooted trees. The $\mop{\sc Mag}$ operad is reminded in \ref{sec 6.2}, and Paragraph \ref{sec 6.3} is devoted to a new operad structure on $\mop{\sc Mag}$ with partial compositions denoted by $\triangle_s$, derived from the magmatic operad structure by shuffling the branches.
In the last section we use this new operadic structure on $\mop{\sc Mag}$ to define an operad structure $\diamond$ on $\mop{\sc Mag}\circ \tq$.

\section{Preliminaries}\label{sec:2}
    \subsection{Species}
    A species is a functor from the category of finite sets $\mop{Fin}$ to the category of vector species $\mop{Vect}$.
    We write $\mop{Sp}$ for the category of species. Given a species $\tp$, we denote the image of a finite set I by $\tp [I]$.
    Hence, a species consists of a family of vector spaces $\tp[I]$, one for each finite set $I$, and linear maps:
    $$\tp [\sigma]: \tp[I] \to \tp[J],$$
    for each bijection $\sigma: I \to J$, such that $\tp[id_I]=id_{\tp[I]}$ and $\tp [\theta\sigma]= \tp[\theta]  \tp[\sigma]$, where $\sigma: I \to J$ and $\theta: J \to K$ are two composable bijections. Therefore, the map $\tp[\sigma]$ is an isomorphism whose inverse is $\tp[\sigma ^{-1}]$.
    Similarly, a species morphism $f : \tp \to \tq$ consists of a family of linear maps defined as follows, for each finite set $I$:
    $f_I : \tp[I] \to \tq[I]$, such that for each bijection $\sigma : I \to J$, the following diagram commutes: 
   
       $$ \xymatrix{
        \tp[I] \ar[d]_{\tp[\sigma]} \ar[r]^{f_I} & \tq[I] \ar[d]^{\tq[\sigma]} \\
        \tp[J] \ar[r]_{f_J} & \tq[J] }$$
    
  Note that a species $\tp$ is positive if $\tp[\emptyset] = \{0\}$.\\
  \textbf{\textit{*Species composition:}}
  Let $\tp$ a species and $\tq$ a positive species. Define the composition of two species $\tp \circ \tq$ as follows. For any finite set $I$,
$$\tp \circ \tq[I] :=\displaystyle \bigoplus_{X\vdash I}\tp[X] \otimes \tq(X),$$
where for a given partition $X$ of $I$, $$\tq(X):=\displaystyle\bigotimes_{S\in X} \tq[S].$$
The unit for the composition is the species $\mathbb I$ is defined by $\mathbb I[I]=\mathbb K$ for $|I|=1$, and $\mathbb I[I]=\{0\}$ otherwise. 
    \subsection{Operads}
  \subsubsection{Global definition of a symmetric operad}
  A symmetric operad is a monoid in $(\mop{Sp}_{+}, \circ)$. More explicitly, a positive symmetric operad is a positive species $\tq$ with associative and unital morphisms of species 
  \[\mu: \tq \circ \tq \to \tq \qquad \text{and} \qquad \eta: \mathbb I \to \tq.\]
  Therefore, for each nonempty finite set $I$ and partition $X \vdash I$, there is a linear map 
  $$\mu: \tq[X] \otimes \left( \displaystyle \bigotimes_{S \in X}\tq[S]\right)\to \tq[I]$$
  called operadic composition, and for each singleton $\{\star\}$, there a unit map, 
  \[\eta : \mathbb{K} \to \tq[\{\star\}]\]
  satisfying naturality, associativity and unitality axioms.
  \subsubsection{Definition of a symmetric operad through partial compositions}
  A symmetric operad is a positive species $\tq$ such that, for any two nonempty finite sets $S,T$, and for any singleton $\{\star\}$, there exist  elements $ u_{\star} \in \tq[\{\star\}]$ and a partial composition operation \[\circ_{s} : \tq[S]\otimes \tq[T] \to \tq[S \sqcup_{s}T],\] 
    satisfying the followings axioms:
  		\begin{itemize}
 		\item Associativity: for $ x \in \tq[S]$, $y \in \tq[T]$
  				\begin{itemize}
  					\item[(A1)] $(x \circ_{s}y)\circ_{s'}z=(x \circ_{s'}z)\circ_{s}y$, if $s$ and $s'$ are two distinct elements of $S$. (Parallel associativity)
  					\item[(A2)] $(x \circ_{s}y)\circ_{t}z=x \circ_{s}(y\circ_{t}z)$, if $s \in S$ and $t \in T$. (Nested associativity)
  				\end{itemize}
      \vspace{.1in}
  		\item Naturality: given two bijections $\sigma_{1}:S \to S'$ and $\sigma_{2}:T \to T'$ and $s \in S$. Then, for every $x \in \tq[S]$, $y \in \tq[T]$
  		\begin{itemize}
  			\item[(N1)] $\tq[\sigma_{1}](x) \circ_{\sigma(s)} \tq[\sigma_{2}](y)= \tq[\sigma](x \circ_{s}y)$, where
       \[\sigma:=(\sigma_{1})|_{S\setminus\{s\}} \bigcup \sigma_{2}:S \,{\sqcup}_{s} T \to S' \sqcup_{\sigma_{1}(s)} T'.\]
  		\item[(N2)] If $s_{1},s_{2}\in S$, then for any bijection $\sigma:\{s_{1}\}\to \{s_{2}\}$ we have $$\tq[\sigma](u_{s_{1}})=u_{s_{2}}.$$
  				\end{itemize}
  			\item Unitality: for $s \in S$ and $x \in \tq[S]$, we have
  				\begin{itemize}
  					\item[(U1)] $u_{\star}\circ_{\star}x=x$, for any singleton $\{\star\}$;
  					\item[(U2)] $x \circ_{s}u_{s}=x.$
  				\end{itemize}
  		\end{itemize}
  \section{\bf{The operads $\mop{NAP}$ and $\mop{\sc Pre-Lie}$ }}
  \subsection{Rooted trees}
  A rooted tree is a nonempty connected graph without loop together with a distinguished vertex called the root.
  The species of rooted trees is denoted by $\mathcal{T}$.
  \[\mathcal{T}[\{\emptyset\}]=\{0\}, \hskip 4mm\mathcal{T}[\{*\}]=\left<\{\!\entoure *\}\right>,\hskip 4mm \mathcal{T}[\{1,2\}]=\left<\left\{\stick 12,\ \stick 21\right\}\right>,\]
  \vskip 3mm
  \[\mathcal T[\{1,2,3\}]=
  \left<\ladder 123, \, \ladder 213,\,  \ladder 132,\,  \ladder 312, \, \ladder 231,\,  \ladder 321, \hskip 4mm \cherry 123\hskip 2mm , \hskip 5mm \cherry 231\hskip 2mm , \hskip 5mm \cherry 312\hskip 5mm\right>.\]
  It is well-known that $\mop{dim}\mathcal{T}[\{1,2,...,n\}]=n^{n-1}$ (see \cite{J1981}, example 12). The species $\mathcal{T}$ is endowed with two different operad structures: the $\mop{NAP}$ operad introduced by M. Livernet in \cite{L2006} and the $\mop{\sc Pre-Lie}$ operad introduced by F. Chapoton and M. Livernet in \cite{CL2001}.\\
  
  In what follows, we use $\mop{In}(s,v)$ to denote the set of edges of a tree $v$ arriving at vertex $s$ and $\mop{Ver}(v)$ to denote the set of vertices of a tree $v$.
  \subsection{The operad structure $\mop{NAP}$}
  Let $I$ be a finite set, $I=S \sqcup T$, and $s \in S$. We define the partial composition as follows: $$\circ_{s}:\mathcal{T}[S] \otimes \mathcal{T}[T] \longrightarrow \mathcal{T}[S \sqcup_{s} T],$$ such that for $u \in \mathcal{T}[S]$ and $v \in \mathcal{T}[T]$, the partial composition $u \circ_{s} v$ is the tree of $\mathcal{T}[S \sqcup_{s} T]$ obtained by replacing the vertex $s$ of $u$ by the tree $v$ and connecting each edge of $u$ arriving at vertex $s$ of $u$ at the root of $v$. For example,
  \vskip 4mm
  \[\igrec 1234\hskip 4mm\circ_2\hskip 4mm\ladder abc \hskip 4mm =\hskip 4mm \treeone 1a34bc \]
  \vskip 3mm
  \noindent The {\sc NAP} operad verifies an important additional property, namely
  \begin{equation}\label{asso-suppl}
    (t\circ_s u)\circ_{\smop{root}(u)} v=\mathcal T[\varphi_{uv}]\big((t\circ_s v)\circ_{\smop{root}(v)} u\big)
  \end{equation}
  for any vertex $s$ of $t$, where $\varphi_{uv}$ is the bijective map from  $\mop{Ver}(t)\sqcup_s \mop{Ver}(v)\sqcup_{\smop{root}(v)} \mop{Ver}(u)$ onto $\mop{Ver}(t)\sqcup_s \mop{Ver}(u)\sqcup_{\smop{root}(u)} \mop{Ver}(v)$ which replaces the root of $u$ by the root of $v$.
  \subsection{The operad structure $\mop{\sc Pre-Lie}$}
  Let $I$ be a finite set, $I=S \sqcup T$, and $s \in S$. We define the partial composition as follows:
  $$\circ_{s}:\mathcal{T}[S] \otimes \mathcal{T}[T]\longrightarrow \mathcal{T}[S \sqcup_{s} T],$$ such that for $u \in \mathcal{T}[S]$ and $v \in \mathcal{T}[T]$, the partial composition
  $$u \circ_{s} v= \displaystyle \sum_{f: \smop{In}(s,u) \to \smop{Ver}(v)}u \circ_{s}^{f} v$$ 
  is the tree of $\mathcal{T}[S \sqcup_{s} T]$ obtained by replacing the vertex $s$ of $u$ by the tree $v$ and
connecting each edge $a\in\mop{In}(s,u)$ to the vertex $f(a)$ of $v$. For example,
\begin{center}
    \begin{tikzpicture}[x=0.75pt,y=0.75pt,yscale=-1,xscale=1]

\draw   (19.83,293.84) .. controls (19.83,288.12) and (23.07,283.48) .. (27.07,283.48) .. controls (31.07,283.48) and (34.31,288.12) .. (34.31,293.84) .. controls (34.31,299.56) and (31.07,304.2) .. (27.07,304.2) .. controls (23.07,304.2) and (19.83,299.56) .. (19.83,293.84) -- cycle ;
\draw   (10,237.17) .. controls (10,231.45) and (12.93,226.81) .. (16.55,226.81) .. controls (20.17,226.81) and (23.1,231.45) .. (23.1,237.17) .. controls (23.1,242.89) and (20.17,247.52) .. (16.55,247.52) .. controls (12.93,247.52) and (10,242.89) .. (10,237.17) -- cycle ;
\draw   (33.91,238.32) .. controls (33.91,232.6) and (37.15,227.96) .. (41.15,227.96) .. controls (45.15,227.96) and (48.4,232.6) .. (48.4,238.32) .. controls (48.4,244.04) and (45.15,248.67) .. (41.15,248.67) .. controls (37.15,248.67) and (33.91,244.04) .. (33.91,238.32) -- cycle ;
\draw   (20.23,263.63) .. controls (20.23,257.91) and (23.47,253.28) .. (27.47,253.28) .. controls (31.47,253.28) and (34.71,257.91) .. (34.71,263.63) .. controls (34.71,269.35) and (31.47,273.99) .. (27.47,273.99) .. controls (23.47,273.99) and (20.23,269.35) .. (20.23,263.63) -- cycle ;
\draw    (27.07,283.48) -- (27.47,273.99) ;
\draw    (33.1,255.58) -- (37.13,248.96) ;
\draw    (20.44,241.48) -- (24.98,253.5) ;
\draw   (62.4,293.84) .. controls (62.4,288.12) and (65.64,283.48) .. (69.64,283.48) .. controls (73.64,283.48) and (76.89,288.12) .. (76.89,293.84) .. controls (76.89,299.56) and (73.64,304.2) .. (69.64,304.2) .. controls (65.64,304.2) and (62.4,299.56) .. (62.4,293.84) -- cycle ;
\draw   (63.86,236.02) .. controls (63.86,230.3) and (66.79,225.66) .. (70.41,225.66) .. controls (74.02,225.66) and (76.96,230.3) .. (76.96,236.02) .. controls (76.96,241.74) and (74.02,246.37) .. (70.41,246.37) .. controls (66.79,246.37) and (63.86,241.74) .. (63.86,236.02) -- cycle ;
\draw   (62.8,263.63) .. controls (62.8,257.91) and (66.05,253.28) .. (70.05,253.28) .. controls (74.05,253.28) and (77.29,257.91) .. (77.29,263.63) .. controls (77.29,269.35) and (74.05,273.99) .. (70.05,273.99) .. controls (66.05,273.99) and (62.8,269.35) .. (62.8,263.63) -- cycle ;
\draw    (69.64,283.48) -- (70.05,273.99) ;
\draw    (70.41,246.37) -- (70.05,253.28) ;
\draw   (104.61,295.57) .. controls (104.61,289.85) and (107.85,285.21) .. (111.86,285.21) .. controls (115.86,285.21) and (119.1,289.85) .. (119.1,295.57) .. controls (119.1,301.29) and (115.86,305.92) .. (111.86,305.92) .. controls (107.85,305.92) and (104.61,301.29) .. (104.61,295.57) -- cycle ;
\draw   (90.42,244.65) .. controls (90.42,238.93) and (93.35,234.29) .. (96.97,234.29) .. controls (100.59,234.29) and (103.52,238.93) .. (103.52,244.65) .. controls (103.52,250.37) and (100.59,255) .. (96.97,255) .. controls (93.35,255) and (90.42,250.37) .. (90.42,244.65) -- cycle ;
\draw   (106.69,236.59) .. controls (106.69,230.87) and (109.93,226.23) .. (113.93,226.23) .. controls (117.93,226.23) and (121.18,230.87) .. (121.18,236.59) .. controls (121.18,242.31) and (117.93,246.95) .. (113.93,246.95) .. controls (109.93,246.95) and (106.69,242.31) .. (106.69,236.59) -- cycle ;
\draw   (105.01,265.36) .. controls (105.01,259.64) and (108.26,255) .. (112.26,255) .. controls (116.26,255) and (119.5,259.64) .. (119.5,265.36) .. controls (119.5,271.08) and (116.26,275.72) .. (112.26,275.72) .. controls (108.26,275.72) and (105.01,271.08) .. (105.01,265.36) -- cycle ;
\draw    (111.86,285.21) -- (112.26,275.72) ;
\draw    (112.26,255) -- (111.89,247.24) ;
\draw    (101.34,252.13) -- (104.98,263.35) ;
\draw   (125.72,212.42) .. controls (125.72,206.7) and (128.65,202.07) .. (132.27,202.07) .. controls (135.88,202.07) and (138.82,206.7) .. (138.82,212.42) .. controls (138.82,218.14) and (135.88,222.78) .. (132.27,222.78) .. controls (128.65,222.78) and (125.72,218.14) .. (125.72,212.42) -- cycle ;
\draw   (124.7,241.19) .. controls (124.7,235.47) and (127.94,230.84) .. (131.94,230.84) .. controls (135.94,230.84) and (139.19,235.47) .. (139.19,241.19) .. controls (139.19,246.91) and (135.94,251.55) .. (131.94,251.55) .. controls (127.94,251.55) and (124.7,246.91) .. (124.7,241.19) -- cycle ;
\draw    (132.27,222.78) -- (131.91,229.69) ;
\draw    (129.72,249.82) -- (119.53,261.62) ;
\draw   (163.56,296.14) .. controls (163.56,290.42) and (166.8,285.79) .. (170.81,285.79) .. controls (174.81,285.79) and (178.05,290.42) .. (178.05,296.14) .. controls (178.05,301.86) and (174.81,306.5) .. (170.81,306.5) .. controls (166.8,306.5) and (163.56,301.86) .. (163.56,296.14) -- cycle ;
\draw   (143.26,221.06) .. controls (143.26,215.34) and (146.19,210.7) .. (149.81,210.7) .. controls (153.42,210.7) and (156.36,215.34) .. (156.36,221.06) .. controls (156.36,226.78) and (153.42,231.41) .. (149.81,231.41) .. controls (146.19,231.41) and (143.26,226.78) .. (143.26,221.06) -- cycle ;
\draw   (156.18,244.07) .. controls (156.18,238.35) and (159.42,233.71) .. (163.42,233.71) .. controls (167.42,233.71) and (170.66,238.35) .. (170.66,244.07) .. controls (170.66,249.79) and (167.42,254.43) .. (163.42,254.43) .. controls (159.42,254.43) and (156.18,249.79) .. (156.18,244.07) -- cycle ;
\draw   (163.96,265.94) .. controls (163.96,260.22) and (167.21,255.58) .. (171.21,255.58) .. controls (175.21,255.58) and (178.45,260.22) .. (178.45,265.94) .. controls (178.45,271.66) and (175.21,276.29) .. (171.21,276.29) .. controls (167.21,276.29) and (163.96,271.66) .. (163.96,265.94) -- cycle ;
\draw    (170.81,285.79) -- (171.21,276.29) ;
\draw    (167.22,258.17) -- (165.73,252.41) ;
\draw    (154.61,229.4) -- (157.88,236.88) ;
\draw   (160.29,218.18) .. controls (160.29,212.46) and (163.22,207.82) .. (166.84,207.82) .. controls (170.45,207.82) and (173.39,212.46) .. (173.39,218.18) .. controls (173.39,223.9) and (170.45,228.54) .. (166.84,228.54) .. controls (163.22,228.54) and (160.29,223.9) .. (160.29,218.18) -- cycle ;
\draw   (183.65,241.77) .. controls (183.65,236.05) and (186.89,231.41) .. (190.89,231.41) .. controls (194.89,231.41) and (198.14,236.05) .. (198.14,241.77) .. controls (198.14,247.49) and (194.89,252.13) .. (190.89,252.13) .. controls (186.89,252.13) and (183.65,247.49) .. (183.65,241.77) -- cycle ;
\draw    (166.84,228.54) -- (166.48,235.44) ;
\draw    (188.67,250.4) -- (178.48,262.2) ;
\draw   (219.74,296.14) .. controls (219.74,290.42) and (222.98,285.79) .. (226.98,285.79) .. controls (230.99,285.79) and (234.23,290.42) .. (234.23,296.14) .. controls (234.23,301.86) and (230.99,306.5) .. (226.98,306.5) .. controls (222.98,306.5) and (219.74,301.86) .. (219.74,296.14) -- cycle ;
\draw   (213.81,188.26) .. controls (213.81,182.54) and (216.74,177.9) .. (220.36,177.9) .. controls (223.97,177.9) and (226.91,182.54) .. (226.91,188.26) .. controls (226.91,193.98) and (223.97,198.62) .. (220.36,198.62) .. controls (216.74,198.62) and (213.81,193.98) .. (213.81,188.26) -- cycle ;
\draw   (212.36,244.07) .. controls (212.36,238.35) and (215.6,233.71) .. (219.6,233.71) .. controls (223.6,233.71) and (226.84,238.35) .. (226.84,244.07) .. controls (226.84,249.79) and (223.6,254.43) .. (219.6,254.43) .. controls (215.6,254.43) and (212.36,249.79) .. (212.36,244.07) -- cycle ;
\draw   (220.14,265.94) .. controls (220.14,260.22) and (223.39,255.58) .. (227.39,255.58) .. controls (231.39,255.58) and (234.63,260.22) .. (234.63,265.94) .. controls (234.63,271.66) and (231.39,276.29) .. (227.39,276.29) .. controls (223.39,276.29) and (220.14,271.66) .. (220.14,265.94) -- cycle ;
\draw    (226.98,285.79) -- (227.39,276.29) ;
\draw    (223.39,258.17) -- (221.91,252.41) ;
\draw    (218.17,198.9) -- (218.23,206.1) ;
\draw   (211.68,216.45) .. controls (211.68,210.73) and (214.61,206.1) .. (218.23,206.1) .. controls (221.84,206.1) and (224.78,210.73) .. (224.78,216.45) .. controls (224.78,222.17) and (221.84,226.81) .. (218.23,226.81) .. controls (214.61,226.81) and (211.68,222.17) .. (211.68,216.45) -- cycle ;
\draw   (239.83,241.77) .. controls (239.83,236.05) and (243.07,231.41) .. (247.07,231.41) .. controls (251.07,231.41) and (254.32,236.05) .. (254.32,241.77) .. controls (254.32,247.49) and (251.07,252.13) .. (247.07,252.13) .. controls (243.07,252.13) and (239.83,247.49) .. (239.83,241.77) -- cycle ;
\draw    (218.23,226.81) -- (217.87,233.71) ;
\draw    (244.85,250.4) -- (234.66,262.2) ;
\draw   (264.26,292.04) .. controls (264.26,286.32) and (267.5,281.69) .. (271.5,281.69) .. controls (275.5,281.69) and (278.75,286.32) .. (278.75,292.04) .. controls (278.75,297.76) and (275.5,302.4) .. (271.5,302.4) .. controls (267.5,302.4) and (264.26,297.76) .. (264.26,292.04) -- cycle ;
\draw   (265.71,234.22) .. controls (265.71,228.5) and (268.65,223.86) .. (272.26,223.86) .. controls (275.88,223.86) and (278.81,228.5) .. (278.81,234.22) .. controls (278.81,239.94) and (275.88,244.57) .. (272.26,244.57) .. controls (268.65,244.57) and (265.71,239.94) .. (265.71,234.22) -- cycle ;
\draw   (264.66,261.84) .. controls (264.66,256.12) and (267.9,251.48) .. (271.9,251.48) .. controls (275.9,251.48) and (279.15,256.12) .. (279.15,261.84) .. controls (279.15,267.56) and (275.9,272.19) .. (271.9,272.19) .. controls (267.9,272.19) and (264.66,267.56) .. (264.66,261.84) -- cycle ;
\draw    (271.5,281.69) -- (271.9,272.19) ;
\draw    (272.26,244.57) -- (271.9,251.48) ;
\draw   (283.03,204.9) .. controls (283.03,199.18) and (286.27,194.54) .. (290.27,194.54) .. controls (294.27,194.54) and (297.52,199.18) .. (297.52,204.9) .. controls (297.52,210.62) and (294.27,215.26) .. (290.27,215.26) .. controls (286.27,215.26) and (283.03,210.62) .. (283.03,204.9) -- cycle ;
\draw    (288.05,213.53) -- (277.86,225.32) ;
\draw   (258.54,206.3) .. controls (258.54,200.58) and (261.78,195.94) .. (265.78,195.94) .. controls (269.78,195.94) and (273.03,200.58) .. (273.03,206.3) .. controls (273.03,212.02) and (269.78,216.66) .. (265.78,216.66) .. controls (261.78,216.66) and (258.54,212.02) .. (258.54,206.3) -- cycle ;
\draw   (257.86,178.68) .. controls (257.86,172.96) and (260.79,168.33) .. (264.41,168.33) .. controls (268.02,168.33) and (270.96,172.96) .. (270.96,178.68) .. controls (270.96,184.4) and (268.02,189.04) .. (264.41,189.04) .. controls (260.79,189.04) and (257.86,184.4) .. (257.86,178.68) -- cycle ;
\draw    (264.41,189.04) -- (264.05,195.94) ;
\draw    (265.78,216.66) -- (269.06,224.14) ;
\draw   (322.96,293.45) .. controls (322.96,287.73) and (326.21,283.09) .. (330.21,283.09) .. controls (334.21,283.09) and (337.45,287.73) .. (337.45,293.45) .. controls (337.45,299.17) and (334.21,303.8) .. (330.21,303.8) .. controls (326.21,303.8) and (322.96,299.17) .. (322.96,293.45) -- cycle ;
\draw   (302.66,218.36) .. controls (302.66,212.64) and (305.59,208) .. (309.21,208) .. controls (312.82,208) and (315.76,212.64) .. (315.76,218.36) .. controls (315.76,224.08) and (312.82,228.71) .. (309.21,228.71) .. controls (305.59,228.71) and (302.66,224.08) .. (302.66,218.36) -- cycle ;
\draw   (315.58,241.37) .. controls (315.58,235.65) and (318.82,231.02) .. (322.82,231.02) .. controls (326.82,231.02) and (330.06,235.65) .. (330.06,241.37) .. controls (330.06,247.09) and (326.82,251.73) .. (322.82,251.73) .. controls (318.82,251.73) and (315.58,247.09) .. (315.58,241.37) -- cycle ;
\draw   (323.36,263.24) .. controls (323.36,257.52) and (326.61,252.88) .. (330.61,252.88) .. controls (334.61,252.88) and (337.85,257.52) .. (337.85,263.24) .. controls (337.85,268.96) and (334.61,273.59) .. (330.61,273.59) .. controls (326.61,273.59) and (323.36,268.96) .. (323.36,263.24) -- cycle ;
\draw    (330.21,283.09) -- (330.61,273.59) ;
\draw    (326.62,255.47) -- (325.13,249.72) ;
\draw    (314.01,226.7) -- (317.28,234.18) ;
\draw   (319.69,215.48) .. controls (319.69,209.76) and (322.62,205.12) .. (326.24,205.12) .. controls (329.86,205.12) and (332.79,209.76) .. (332.79,215.48) .. controls (332.79,221.2) and (329.86,225.84) .. (326.24,225.84) .. controls (322.62,225.84) and (319.69,221.2) .. (319.69,215.48) -- cycle ;
\draw   (343.05,239.07) .. controls (343.05,233.35) and (346.29,228.71) .. (350.29,228.71) .. controls (354.29,228.71) and (357.54,233.35) .. (357.54,239.07) .. controls (357.54,244.79) and (354.29,249.43) .. (350.29,249.43) .. controls (346.29,249.43) and (343.05,244.79) .. (343.05,239.07) -- cycle ;
\draw    (326.24,225.84) -- (325.88,232.74) ;
\draw    (348.07,247.7) -- (337.88,259.5) ;
\draw   (365.56,293.84) .. controls (365.56,288.12) and (368.8,283.48) .. (372.8,283.48) .. controls (376.8,283.48) and (380.05,288.12) .. (380.05,293.84) .. controls (380.05,299.56) and (376.8,304.2) .. (372.8,304.2) .. controls (368.8,304.2) and (365.56,299.56) .. (365.56,293.84) -- cycle ;
\draw   (367.02,236.02) .. controls (367.02,230.3) and (369.95,225.66) .. (373.57,225.66) .. controls (377.18,225.66) and (380.12,230.3) .. (380.12,236.02) .. controls (380.12,241.74) and (377.18,246.37) .. (373.57,246.37) .. controls (369.95,246.37) and (367.02,241.74) .. (367.02,236.02) -- cycle ;
\draw   (365.96,263.63) .. controls (365.96,257.91) and (369.21,253.28) .. (373.21,253.28) .. controls (377.21,253.28) and (380.45,257.91) .. (380.45,263.63) .. controls (380.45,269.35) and (377.21,273.99) .. (373.21,273.99) .. controls (369.21,273.99) and (365.96,269.35) .. (365.96,263.63) -- cycle ;
\draw    (372.8,283.48) -- (373.21,273.99) ;
\draw    (373.57,246.37) -- (373.21,253.28) ;
\draw   (384.33,206.7) .. controls (384.33,200.98) and (387.57,196.34) .. (391.58,196.34) .. controls (395.58,196.34) and (398.82,200.98) .. (398.82,206.7) .. controls (398.82,212.42) and (395.58,217.05) .. (391.58,217.05) .. controls (387.57,217.05) and (384.33,212.42) .. (384.33,206.7) -- cycle ;
\draw    (389.35,215.33) -- (379.16,227.12) ;
\draw   (359.84,208.1) .. controls (359.84,202.38) and (363.08,197.74) .. (367.08,197.74) .. controls (371.08,197.74) and (374.33,202.38) .. (374.33,208.1) .. controls (374.33,213.82) and (371.08,218.46) .. (367.08,218.46) .. controls (363.08,218.46) and (359.84,213.82) .. (359.84,208.1) -- cycle ;
\draw   (359.16,180.48) .. controls (359.16,174.76) and (362.09,170.12) .. (365.71,170.12) .. controls (369.33,170.12) and (372.26,174.76) .. (372.26,180.48) .. controls (372.26,186.2) and (369.33,190.84) .. (365.71,190.84) .. controls (362.09,190.84) and (359.16,186.2) .. (359.16,180.48) -- cycle ;
\draw    (365.71,190.84) -- (365.35,197.74) ;
\draw    (367.08,218.46) -- (370.36,225.94) ;
\draw   (410.66,294.28) .. controls (410.66,288.56) and (413.9,283.92) .. (417.9,283.92) .. controls (421.9,283.92) and (425.15,288.56) .. (425.15,294.28) .. controls (425.15,300) and (421.9,304.63) .. (417.9,304.63) .. controls (413.9,304.63) and (410.66,300) .. (410.66,294.28) -- cycle ;
\draw   (404.72,186.39) .. controls (404.72,180.67) and (407.66,176.03) .. (411.27,176.03) .. controls (414.89,176.03) and (417.82,180.67) .. (417.82,186.39) .. controls (417.82,192.11) and (414.89,196.75) .. (411.27,196.75) .. controls (407.66,196.75) and (404.72,192.11) .. (404.72,186.39) -- cycle ;
\draw   (403.27,242.2) .. controls (403.27,236.48) and (406.52,231.85) .. (410.52,231.85) .. controls (414.52,231.85) and (417.76,236.48) .. (417.76,242.2) .. controls (417.76,247.92) and (414.52,252.56) .. (410.52,252.56) .. controls (406.52,252.56) and (403.27,247.92) .. (403.27,242.2) -- cycle ;
\draw   (411.06,264.07) .. controls (411.06,258.35) and (414.3,253.71) .. (418.3,253.71) .. controls (422.3,253.71) and (425.55,258.35) .. (425.55,264.07) .. controls (425.55,269.79) and (422.3,274.43) .. (418.3,274.43) .. controls (414.3,274.43) and (411.06,269.79) .. (411.06,264.07) -- cycle ;
\draw    (417.9,283.92) -- (418.3,274.43) ;
\draw    (414.31,256.3) -- (412.83,250.55) ;
\draw    (409.09,197.04) -- (409.14,204.23) ;
\draw   (402.59,214.59) .. controls (402.59,208.87) and (405.53,204.23) .. (409.14,204.23) .. controls (412.76,204.23) and (415.69,208.87) .. (415.69,214.59) .. controls (415.69,220.31) and (412.76,224.94) .. (409.14,224.94) .. controls (405.53,224.94) and (402.59,220.31) .. (402.59,214.59) -- cycle ;
\draw   (430.74,239.9) .. controls (430.74,234.18) and (433.99,229.55) .. (437.99,229.55) .. controls (441.99,229.55) and (445.23,234.18) .. (445.23,239.9) .. controls (445.23,245.62) and (441.99,250.26) .. (437.99,250.26) .. controls (433.99,250.26) and (430.74,245.62) .. (430.74,239.9) -- cycle ;
\draw    (409.14,224.94) -- (408.78,231.85) ;
\draw    (435.77,248.53) -- (425.58,260.33) ;
\draw   (451.22,292.91) .. controls (451.22,287.19) and (454.46,282.55) .. (458.46,282.55) .. controls (462.46,282.55) and (465.71,287.19) .. (465.71,292.91) .. controls (465.71,298.63) and (462.46,303.26) .. (458.46,303.26) .. controls (454.46,303.26) and (451.22,298.63) .. (451.22,292.91) -- cycle ;
\draw   (452.67,235.08) .. controls (452.67,229.36) and (455.61,224.72) .. (459.22,224.72) .. controls (462.84,224.72) and (465.77,229.36) .. (465.77,235.08) .. controls (465.77,240.8) and (462.84,245.44) .. (459.22,245.44) .. controls (455.61,245.44) and (452.67,240.8) .. (452.67,235.08) -- cycle ;
\draw   (451.62,262.7) .. controls (451.62,256.98) and (454.86,252.34) .. (458.86,252.34) .. controls (462.87,252.34) and (466.11,256.98) .. (466.11,262.7) .. controls (466.11,268.42) and (462.87,273.06) .. (458.86,273.06) .. controls (454.86,273.06) and (451.62,268.42) .. (451.62,262.7) -- cycle ;
\draw    (458.46,282.55) -- (458.86,273.06) ;
\draw    (459.22,245.44) -- (458.86,252.34) ;
\draw   (436.55,214.77) .. controls (436.55,209.05) and (439.49,204.41) .. (443.1,204.41) .. controls (446.72,204.41) and (449.65,209.05) .. (449.65,214.77) .. controls (449.65,220.49) and (446.72,225.13) .. (443.1,225.13) .. controls (439.49,225.13) and (436.55,220.49) .. (436.55,214.77) -- cycle ;
\draw   (452.82,206.72) .. controls (452.82,201) and (456.06,196.36) .. (460.07,196.36) .. controls (464.07,196.36) and (467.31,201) .. (467.31,206.72) .. controls (467.31,212.44) and (464.07,217.07) .. (460.07,217.07) .. controls (456.06,217.07) and (452.82,212.44) .. (452.82,206.72) -- cycle ;
\draw    (458.39,225.13) -- (458.02,217.36) ;
\draw    (447.47,222.25) -- (451.11,233.47) ;
\draw   (470.83,211.32) .. controls (470.83,205.6) and (474.08,200.96) .. (478.08,200.96) .. controls (482.08,200.96) and (485.32,205.6) .. (485.32,211.32) .. controls (485.32,217.04) and (482.08,221.68) .. (478.08,221.68) .. controls (474.08,221.68) and (470.83,217.04) .. (470.83,211.32) -- cycle ;
\draw    (475.85,219.95) -- (465.67,231.74) ;
\draw   (492.1,291.04) .. controls (492.1,285.32) and (495.34,280.68) .. (499.34,280.68) .. controls (503.34,280.68) and (506.59,285.32) .. (506.59,291.04) .. controls (506.59,296.76) and (503.34,301.4) .. (499.34,301.4) .. controls (495.34,301.4) and (492.1,296.76) .. (492.1,291.04) -- cycle ;
\draw   (493.55,233.21) .. controls (493.55,227.49) and (496.49,222.86) .. (500.1,222.86) .. controls (503.72,222.86) and (506.65,227.49) .. (506.65,233.21) .. controls (506.65,238.93) and (503.72,243.57) .. (500.1,243.57) .. controls (496.49,243.57) and (493.55,238.93) .. (493.55,233.21) -- cycle ;
\draw   (492.5,260.83) .. controls (492.5,255.11) and (495.74,250.48) .. (499.74,250.48) .. controls (503.74,250.48) and (506.99,255.11) .. (506.99,260.83) .. controls (506.99,266.55) and (503.74,271.19) .. (499.74,271.19) .. controls (495.74,271.19) and (492.5,266.55) .. (492.5,260.83) -- cycle ;
\draw    (499.34,280.68) -- (499.74,271.19) ;
\draw    (500.1,243.57) -- (499.74,250.48) ;
\draw   (493.7,204.85) .. controls (493.7,199.13) and (496.94,194.49) .. (500.94,194.49) .. controls (504.94,194.49) and (508.19,199.13) .. (508.19,204.85) .. controls (508.19,210.57) and (504.94,215.2) .. (500.94,215.2) .. controls (496.94,215.2) and (493.7,210.57) .. (493.7,204.85) -- cycle ;
\draw    (499.27,223.26) -- (498.9,215.49) ;
\draw   (512.51,176.77) .. controls (512.51,171.05) and (515.76,166.42) .. (519.76,166.42) .. controls (523.76,166.42) and (527,171.05) .. (527,176.77) .. controls (527,182.49) and (523.76,187.13) .. (519.76,187.13) .. controls (515.76,187.13) and (512.51,182.49) .. (512.51,176.77) -- cycle ;
\draw    (517.53,185.41) -- (507.35,197.2) ;
\draw   (480.64,177.43) .. controls (480.64,171.71) and (483.57,167.07) .. (487.19,167.07) .. controls (490.81,167.07) and (493.74,171.71) .. (493.74,177.43) .. controls (493.74,183.15) and (490.81,187.78) .. (487.19,187.78) .. controls (483.57,187.78) and (480.64,183.15) .. (480.64,177.43) -- cycle ;
\draw    (491.56,184.91) -- (495.19,196.13) ;

\draw (20.67,290) node [anchor=north west][inner sep=0.75pt]    {$1$};
\draw (21.87,255.75) node [anchor=north west][inner sep=0.75pt]    {$2$};
\draw (12,230) node [anchor=north west][inner sep=0.75pt]    {$3$};
\draw (35.56,228.99) node [anchor=north west][inner sep=0.75pt]    {$4$};
\draw (37.5,279.2) node [anchor=north west][inner sep=0.75pt]    {$\circ _{2}$};
\draw (64,288) node [anchor=north west][inner sep=0.75pt]    {$a$};
\draw (64,256) node [anchor=north west][inner sep=0.75pt]    {$b$};
\draw (64,230) node [anchor=north west][inner sep=0.75pt]    {$c$};
\draw (78.78,280.78) node [anchor=north west][inner sep=0.75pt]    {$=$};
\draw (105.45,290) node [anchor=north west][inner sep=0.75pt]    {$1$};
\draw (90.24,235.32) node [anchor=north west][inner sep=0.75pt]    {$3$};
\draw (106.51,230.14) node [anchor=north west][inner sep=0.75pt]    {$4$};
\draw (126.31,232.16) node [anchor=north west][inner sep=0.75pt]    {$b$};
\draw (127,206) node [anchor=north west][inner sep=0.75pt]    {$c$};
\draw (164.4,290) node [anchor=north west][inner sep=0.75pt]    {$1$};
\draw (143.81,212.3) node [anchor=north west][inner sep=0.75pt]    {$c$};
\draw (156,237.62) node [anchor=north west][inner sep=0.75pt]    {$b$};
\draw (185.26,232.73) node [anchor=north west][inner sep=0.75pt]    {$4$};
\draw (161.93,210) node [anchor=north west][inner sep=0.75pt]    {$3$};
\draw (105.82,259) node [anchor=north west][inner sep=0.75pt]    {$a$};
\draw (165.13,259) node [anchor=north west][inner sep=0.75pt]    {$a$};
\draw (131.41,280.08) node [anchor=north west][inner sep=0.75pt]    {$+$};
\draw (220.58,290) node [anchor=north west][inner sep=0.75pt]    {$1$};
\draw (214.36,179.51) node [anchor=north west][inner sep=0.75pt]    {$3$};
\draw (212.18,237.62) node [anchor=north west][inner sep=0.75pt]    {$b$};
\draw (241.44,232.73) node [anchor=north west][inner sep=0.75pt]    {$4$};
\draw (213.32,211) node [anchor=north west][inner sep=0.75pt]    {$c$};
\draw (221.31,259) node [anchor=north west][inner sep=0.75pt]    {$a$};
\draw (188.89,280.08) node [anchor=north west][inner sep=0.75pt]    {$+$};
\draw (265.1,290) node [anchor=north west][inner sep=0.75pt]    {$1$};
\draw (266.31,259) node [anchor=north west][inner sep=0.75pt]    {$a$};
\draw (266.99,225.47) node [anchor=north west][inner sep=0.75pt]    {$b$};
\draw (284.64,195.86) node [anchor=north west][inner sep=0.75pt]    {$4$};
\draw (260.28,198) node [anchor=north west][inner sep=0.75pt]    {$c$};
\draw (259.5,170.51) node [anchor=north west][inner sep=0.75pt]    {$3$};
\draw (241.78,280.08) node [anchor=north west][inner sep=0.75pt]    {$+$};
\draw (323.8,287) node [anchor=north west][inner sep=0.75pt]    {$1$};
\draw (303.21,212) node [anchor=north west][inner sep=0.75pt]    {$c$};
\draw (315.4,234.92) node [anchor=north west][inner sep=0.75pt]    {$b$};
\draw (344.66,230.03) node [anchor=north west][inner sep=0.75pt]    {$3$};
\draw (321.33,207.31) node [anchor=north west][inner sep=0.75pt]    {$4$};
\draw (324.53,259) node [anchor=north west][inner sep=0.75pt]    {$a$};
\draw (297.64,280.08) node [anchor=north west][inner sep=0.75pt]    {$+$};
\draw (366.4,287) node [anchor=north west][inner sep=0.75pt]    {$1$};
\draw (367.61,259) node [anchor=north west][inner sep=0.75pt]    {$a$};
\draw (368.29,227.26) node [anchor=north west][inner sep=0.75pt]    {$b$};
\draw (385.94,197.66) node [anchor=north west][inner sep=0.75pt]    {$3$};
\draw (361.58,200) node [anchor=north west][inner sep=0.75pt]    {$c$};
\draw (360.8,172.31) node [anchor=north west][inner sep=0.75pt]    {$4$};
\draw (344.57,280.08) node [anchor=north west][inner sep=0.75pt]    {$+$};
\draw (411.5,290) node [anchor=north west][inner sep=0.75pt]    {$1$};
\draw (405.27,177.64) node [anchor=north west][inner sep=0.75pt]    {$4$};
\draw (403.1,235.75) node [anchor=north west][inner sep=0.75pt]    {$b$};
\draw (404.23,209) node [anchor=north west][inner sep=0.75pt]    {$c$};
\draw (412.23,259) node [anchor=north west][inner sep=0.75pt]    {$a$};
\draw (387.25,280.08) node [anchor=north west][inner sep=0.75pt]    {$+$};
\draw (431.73,230.33) node [anchor=north west][inner sep=0.75pt]    {$3$};
\draw (452.06,290) node [anchor=north west][inner sep=0.75pt]    {$1$};
\draw (453.27,259) node [anchor=north west][inner sep=0.75pt]    {$a$};
\draw (453.95,226.33) node [anchor=north west][inner sep=0.75pt]    {$b$};
\draw (436.38,205.44) node [anchor=north west][inner sep=0.75pt]    {$3$};
\draw (455.75,200) node [anchor=north west][inner sep=0.75pt]    {$c$};
\draw (472.44,202.28) node [anchor=north west][inner sep=0.75pt]    {$4$};
\draw (432.27,280.08) node [anchor=north west][inner sep=0.75pt]    {$+$};
\draw (492.94,290) node [anchor=north west][inner sep=0.75pt]    {$1$};
\draw (494.15,259) node [anchor=north west][inner sep=0.75pt]    {$a$};
\draw (494.83,224.46) node [anchor=north west][inner sep=0.75pt]    {$b$};
\draw (496.63,198) node [anchor=north west][inner sep=0.75pt]    {$c$};
\draw (514.12,167.74) node [anchor=north west][inner sep=0.75pt]    {$4$};
\draw (480.46,168.1) node [anchor=north west][inner sep=0.75pt]    {$3$};
\draw (469.94,280.08) node [anchor=north west][inner sep=0.75pt]    {$+$};

\end{tikzpicture}

\end{center}
  \section{Operad structure on $\mop{NAP} \circ \tq$}
\noindent From the substitution formula
	$$\mop{NAP} \circ \tq [I]=\displaystyle \bigoplus_{\pi \vdash I} \mop{NAP} [\pi] \otimes \tq(\pi),$$
	one can see that a typical element in $\mop{NAP} \circ \tq [I]$ is of the form $t \otimes \displaystyle \bigotimes_{C \in \pi}\gamma_C$, where $\pi \vdash I$ and $t \in \mop{NAP}[\pi]$, and where $\gamma_C\in \tq[C]$ for any block $C$ of $\pi$. A natural representation of this element is obtained by drawing the rooted tree $t$ and writing $\gamma_C$ in a bubble placed at vertex $C$ for any $C\in\pi$. We define the partial composition 
 \[\square_s:\mop{NAP}\circ \tq[S]\otimes \mop{NAP}\circ \tq[T]\to \mop{NAP}\circ \tq[S\sqcup_s T]\]
 as follows: let $\pi \vdash S$, let $ \rho \vdash T$, let $s \in S$ and let $C_s$ be the block of $\pi$ which contains $s$. Then
	$$\Bigg( t \otimes \displaystyle \bigotimes_{C \in \pi} \gamma_{C} \Bigg) \square_{s} \Bigg( u \otimes \displaystyle \bigotimes_{B \in \rho} \beta_{B} \Bigg):=(t\circ_{C_{s}}u) \otimes \bigotimes_{D \in \pi \sqcup_{s}\rho}\alpha_{D}$$
	with   
 \[
\left \{
\begin{array}{c  c}
    \alpha_{D}=\gamma_{C}  & \text{ if } D \in \pi\setminus \{C_{s}\}\\
    \alpha_{D}=\beta_{B}& \text{ if } D \in \rho\setminus \{B_{\smop{root}(u)}\}\\
    \alpha_{D}=\gamma_{C_{s}}\circ_{s} \beta_{B_{\smop{root}(u)}} &\text{ if } D= C_s \sqcup_s B_{\smop{root}(u)},\\& \text{ where }B_{\smop{root}(u)}\text{ is the block of } \rho\text{ in the root of the tree }u.
\end{array}
\right.
\]
\vskip 5mm
\begin{example}
\[
\scalebox{0.7}{
\begin{tikzpicture}[x=0.75pt,y=0.75pt,yscale=-1,xscale=1]

\draw   (25.63,341.68) .. controls (25.63,334.33) and (32.85,328.36) .. (41.77,328.36) .. controls (50.68,328.36) and (57.9,334.33) .. (57.9,341.68) .. controls (57.9,349.04) and (50.68,355) .. (41.77,355) .. controls (32.85,355) and (25.63,349.04) .. (25.63,341.68) -- cycle ;
\draw    (41.77,306.28) -- (41.77,328.36) ;
\draw   (23.08,292.61) .. controls (23.08,284.29) and (31.26,277.54) .. (41.34,277.54) .. controls (51.43,277.54) and (59.6,284.29) .. (59.6,292.61) .. controls (59.6,300.94) and (51.43,307.68) .. (41.34,307.68) .. controls (31.26,307.68) and (23.08,300.94) .. (23.08,292.61) -- cycle ;
\draw    (55,284) -- (67,267) ;
\draw    (18,266) -- (29,281) ;
\draw   (1,253.01) .. controls (1,245.65) and (8.22,239.69) .. (17.14,239.69) .. controls (26.05,239.69) and (33.27,245.65) .. (33.27,253.01) .. controls (33.27,260.36) and (26.05,266.33) .. (17.14,266.33) .. controls (8.22,266.33) and (1,260.36) .. (1,253.01) -- cycle ;
\draw   (52.81,253.01) .. controls (52.81,245.65) and (60.03,239.69) .. (68.95,239.69) .. controls (77.86,239.69) and (85.08,245.65) .. (85.08,253.01) .. controls (85.08,260.36) and (77.86,266.33) .. (68.95,266.33) .. controls (60.03,266.33) and (52.81,260.36) .. (52.81,253.01) -- cycle ;
\draw   (135.19,340.45) .. controls (135.19,333.2) and (142.32,327.31) .. (151.12,327.31) .. controls (159.91,327.31) and (167.04,333.2) .. (167.04,340.45) .. controls (167.04,347.71) and (159.91,353.6) .. (151.12,353.6) .. controls (142.32,353.6) and (135.19,347.71) .. (135.19,340.45) -- cycle ;
\draw   (135.4,251.08) .. controls (135.4,243.82) and (142.53,237.94) .. (151.33,237.94) .. controls (160.12,237.94) and (167.25,243.82) .. (167.25,251.08) .. controls (167.25,258.34) and (160.12,264.22) .. (151.33,264.22) .. controls (142.53,264.22) and (135.4,258.34) .. (135.4,251.08) -- cycle ;
\draw   (135.19,294.19) .. controls (135.19,286.93) and (142.32,281.05) .. (151.12,281.05) .. controls (159.91,281.05) and (167.04,286.93) .. (167.04,294.19) .. controls (167.04,301.45) and (159.91,307.33) .. (151.12,307.33) .. controls (142.32,307.33) and (135.19,301.45) .. (135.19,294.19) -- cycle ;
\draw    (151,263) -- (151,281) ;
\draw    (151,307) -- (151,327) ;
\draw   (284.67,338.88) .. controls (284.67,331.52) and (291.9,325.56) .. (300.81,325.56) .. controls (309.72,325.56) and (316.95,331.52) .. (316.95,338.88) .. controls (316.95,346.23) and (309.72,352.2) .. (300.81,352.2) .. controls (291.9,352.2) and (284.67,346.23) .. (284.67,338.88) -- cycle ;
\draw    (299,310) -- (299,326) ;
\draw   (260,290.42) .. controls (260,279.73) and (276.34,271.06) .. (296.5,271.06) .. controls (316.66,271.06) and (333,279.73) .. (333,290.42) .. controls (333,301.12) and (316.66,309.79) .. (296.5,309.79) .. controls (276.34,309.79) and (260,301.12) .. (260,290.42) -- cycle ;
\draw    (321,276) -- (352,256) ;
\draw    (246,259) -- (271,276) ;
\draw   (226.07,249.54) .. controls (226.07,243.9) and (232.72,239.32) .. (240.92,239.32) .. controls (249.12,239.32) and (255.77,243.9) .. (255.77,249.54) .. controls (255.77,255.18) and (249.12,259.76) .. (240.92,259.76) .. controls (232.72,259.76) and (226.07,255.18) .. (226.07,249.54) -- cycle ;
\draw   (343.3,246.31) .. controls (343.3,240.67) and (349.95,236.09) .. (358.15,236.09) .. controls (366.35,236.09) and (373,240.67) .. (373,246.31) .. controls (373,251.96) and (366.35,256.53) .. (358.15,256.53) .. controls (349.95,256.53) and (343.3,251.96) .. (343.3,246.31) -- cycle ;
\draw   (279.41,179.59) .. controls (279.41,171.25) and (285.97,164.5) .. (294.06,164.5) .. controls (302.16,164.5) and (308.72,171.25) .. (308.72,179.59) .. controls (308.72,187.92) and (302.16,194.67) .. (294.06,194.67) .. controls (285.97,194.67) and (279.41,187.92) .. (279.41,179.59) -- cycle ;
\draw   (275,225.74) .. controls (275,220.17) and (283.73,215.65) .. (294.5,215.65) .. controls (305.27,215.65) and (314,220.17) .. (314,225.74) .. controls (314,231.31) and (305.27,235.82) .. (294.5,235.82) .. controls (283.73,235.82) and (275,231.31) .. (275,225.74) -- cycle ;
\draw    (294,195) -- (294,216) ;
\draw    (294,236) -- (294,271) ;

\draw (30,334) node [anchor=north west][inner sep=0.75pt]    {$\alpha _{C_1}$};
\draw (30.54,279) node [anchor=north west][inner sep=0.75pt]    {$\alpha _{C_2}$};
\draw (26,292) node [anchor=north west][inner sep=0.75pt]    {$\cdot \, \mathbf s$};
\draw (56,245) node [anchor=north west][inner sep=0.75pt]    {$\alpha _{C_4}$};
\draw (5,245) node [anchor=north west][inner sep=0.75pt]    {$\alpha _{C_3}$};
\draw (91.77,303.64) node [anchor=north west][inner sep=0.75pt]    {$\square_{s}$};
\draw (140,330) node [anchor=north west][inner sep=0.75pt]    {$\beta _{B_a}$};
\draw (140,285) node [anchor=north west][inner sep=0.75pt]    {$\beta _{B_b}$};
\draw (140,243) node [anchor=north west][inner sep=0.75pt]    {$\beta _{B_c}$};
\draw (177.55,303.24) node [anchor=north west][inner sep=0.75pt]    {$=$};
\draw (290,331) node [anchor=north west][inner sep=0.75pt]    {$\alpha _{C_1}$};
\draw (260.79,279.21) node [anchor=north west][inner sep=0.75pt]   {$\alpha _{C_2} \circ _{s} \beta _{B_a}$};
\draw (228,243) node [anchor=north west][inner sep=0.75pt]    {$\alpha _{C_3}$};
\draw (345,239) node [anchor=north west][inner sep=0.75pt]    {$\alpha _{C_4}$};
\draw (283,217) node [anchor=north west][inner sep=0.75pt]    {$\beta _{B_b}$};
\draw (280.95,168.81) node [anchor=north west][inner sep=0.75pt]    {$\beta _{B_c}$};

\end{tikzpicture}
}
\]
\end{example}
	 \begin{theorem}
	 	$(\mop{NAP} \circ \tq, \square)$ is an operad.
	 \end{theorem}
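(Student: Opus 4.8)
The plan is to verify the six partial-composition axioms (N1)--(N2), (A1)--(A2), (U1)--(U2) directly, exploiting the fact that $\square_s$ decouples into two non-interacting pieces: a $\mop{NAP}$ partial composition on the underlying rooted tree (the \emph{skeleton}) and a $\tq$ partial composition on the decoration of a single block. First I will fix the reading of an element of $\mop{NAP}\circ\tq[I]$ as a decorated tree: a rooted tree $t$ whose vertex set is a partition $\pi\vdash I$, with $\gamma_C\in\tq[C]$ sitting in the bubble at the vertex $C$. I will also make explicit the partition $\pi\sqcup_s\rho\vdash S\sqcup_s T$ carrying $t\circ_{C_s}u$: its blocks are the $C\in\pi\setminus\{C_s\}$, the $B\in\rho\setminus\{B_{\smop{root}(u)}\}$, and the merged block $(C_s\setminus\{s\})\cup B_{\smop{root}(u)}$, which is the new label of the root of $u$; in particular it depends on $u$ through $\smop{root}(u)$. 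Unfolding the $\mop{NAP}$ composition, $\square_s$ is then: graft the decorated tree $u$ in place of the vertex $C_s$ of $t$, connect the edges of $\mop{In}(C_s,t)$ to $\smop{root}(u)$, relabel the grafted root by merging its block with $C_s\setminus\{s\}$, and replace its decoration $\beta_{B_{\smop{root}(u)}}$ by $\gamma_{C_s}\circ_s\beta_{B_{\smop{root}(u)}}$, the $\tq$-composition being legitimate because $s\in C_s$. Throughout I abbreviate $B_{\smop{root}(u)}$, $\beta_{B_{\smop{root}(u)}}$ as $\smop{root}(u)$, $\beta_{\smop{root}(u)}$, and similarly for $w$.

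Naturality and unitality will be quick. For (N1)--(N2) one notes that every ingredient of $\square_s$ --- the $\mop{NAP}$ partial composition, the $\tq$ partial composition, and the block-merging operation on partitions --- is natural, hence so is $\square$; and the unit of $\mop{NAP}\circ\tq$ on a singleton $\{\star\}$ is the one-vertex tree decorated by $u_\star\in\tq[\{\star\}]$, so (N2) reduces to (N2) for $\tq$. For (U1), $u_\star\square_\star x$ grafts all of $x$ in place of the unique vertex $\{\star\}$: the skeleton is unchanged by (U1) for $\mop{NAP}$, the relabelling of blocks is trivial, and the decoration at the root of $x$ is left-composed with $u_\star$, which changes nothing by (U1) for $\tq$. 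Symmetrically, $x\square_s u_s=x$ follows from (U2) for $\mop{NAP}$ (namely $t\circ_{C_s}\bullet=t$, where $\bullet$ is the one-vertex tree) together with $\gamma_{C_s}\circ_s u_s=\gamma_{C_s}$ by (U2) for $\tq$.

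For parallel associativity (A1), let $x\in\mop{NAP}\circ\tq[S]$, $y\in\mop{NAP}\circ\tq[T]$, $z\in\mop{NAP}\circ\tq[U]$ and let $s,s'\in S$ be distinct, and split into two cases. If $s$ and $s'$ lie in distinct blocks of $\pi$, the two graftings take place at unrelated vertices of the skeleton, so the tree parts agree by (A1) for $\mop{NAP}$ and the two block decorations are altered independently. If $s,s'$ lie in one block $C$, then on the skeleton both orders graft $u$ and $w$ in place of $C$ one after the other, yielding $(t\circ_C u)\circ_{\smop{root}(u)}w$ one way and $(t\circ_C w)\circ_{\smop{root}(w)}u$ the other; by \eqref{asso-suppl} these differ only by the relabelling $\varphi_{uw}$, which is precisely the identification of $\smop{root}(u)$ with $\smop{root}(w)$ on the common merged block $(C\setminus\{s,s'\})\cup\smop{root}(u)\cup\smop{root}(w)$; and that block's decoration is $(\gamma_C\circ_s\beta_{\smop{root}(u)})\circ_{s'}\delta_{\smop{root}(w)}=(\gamma_C\circ_{s'}\delta_{\smop{root}(w)})\circ_s\beta_{\smop{root}(u)}$ by (A1) for $\tq$. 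Matching the two descriptions of the merged block and checking that $\varphi_{uw}$ respects the surviving decorations completes this case.

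For nested associativity (A2), with $s\in S$ and $t\in T$, the grafting of $z$ in $x\square_s(y\square_t z)$ happens inside $y$, at the block $B_t\ni t$ of $\rho$; split into two cases again. If $B_t\neq\smop{root}(u)$, grafting $z$ leaves the root of $y$ in place, so both orders give the skeleton $t\circ_{C_s}(u\circ_{B_t}w)$ --- precisely (A2) for $\mop{NAP}$ --- and the decorations at $C_s$ and at $B_t$ are modified independently, becoming $\gamma_{C_s}\circ_s\beta_{\smop{root}(u)}$ and $\beta_{B_t}\circ_t\delta_{\smop{root}(w)}$. If $B_t=\smop{root}(u)$, grafting $z$ replaces the root of $y$, so after both operations the apex block carries $(\gamma_{C_s}\circ_s\beta_{\smop{root}(u)})\circ_t\delta_{\smop{root}(w)}$ on one side and $\gamma_{C_s}\circ_s(\beta_{\smop{root}(u)}\circ_t\delta_{\smop{root}(w)})$ on the other, equal by (A2) for $\tq$, while the skeletons agree by (A2) for $\mop{NAP}$ applied to $(t\circ_{C_s}u)\circ_{\smop{root}(u)}w$. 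The only genuine difficulty is the bookkeeping: keeping the canonical relabellings of block-partitions consistent through two successive $\square$'s --- above all tracking which block becomes the root after a graft onto a root, in the second case of (A2), and confirming that $\varphi_{uw}$ from \eqref{asso-suppl} is exactly the relabelling arising in the same-block case of (A1). Once these identifications are pinned down, every axiom separates cleanly into a skeleton identity (a $\mop{NAP}$ axiom, or \eqref{asso-suppl}) and a decoration identity (a $\tq$ axiom), with no real interaction between the two.
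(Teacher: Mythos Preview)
Your proposal is correct and follows essentially the same route as the paper: the same case split for (A1) according to whether $s,s'$ share a block (invoking \eqref{asso-suppl} and parallel associativity of $\tq$ in the same-block case, and parallel associativity of $\mop{NAP}$ in the other), the same case split for (A2) according to whether $B_t$ is the root block of $u$ (invoking nested associativity of both $\mop{NAP}$ and $\tq$ when it is, and only of $\mop{NAP}$ when it is not), and the same reduction of naturality and unitality to those of $\tq$. Your framing of $\square_s$ as a ``skeleton part plus decoration part'' is exactly the mechanism the paper exploits, and your identification of the bookkeeping with $\varphi_{uw}$ is the same observation the paper uses implicitly.
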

	\begin{proof}\strut\\
        \begin{itemize}
            \item Associativity: let $I=S\sqcup T\sqcup R$,  let $\pi \vdash S$, $ \rho \vdash T$ and $\tau \vdash R$,  let $s,s' \in S$ and let $t \in T$. Let $C_{s}$, $C_{s'}$ be the blocks of $\pi$ which contain respectively $s$ and $s'$, and let 
           $A_{\smop{root}(v)}$ be the block of $\tau$ which contains the root of $v$.
            \begin{itemize}
		      \item Parallel associativity: two subcases must be considered.\\
        
		      \noindent - If $C_{s}=C_{s'}$, we have:
		      \begin{eqnarray*}
			& &\left(\bigg( t \otimes \displaystyle \bigotimes_{C \in \pi} \gamma_{C} \bigg) \square_{s} \bigg( u \otimes \displaystyle \bigotimes_{B \in \rho} \beta_{B} \bigg)\right) \square_{s'} \Bigg( v \otimes \bigotimes_{A\in \tau} \alpha_A \Bigg)\\
           &=&\left( (t\circ_{C_{s}}u) \otimes \Big(\bigotimes_{C \in  \pi\setminus \{C_{s}\}}\gamma_{c}\Big) \otimes (\gamma_{C_{s}} \circ_s \beta_{B_{\ssmop{root}(u)}}) \otimes \Big(\bigotimes_{B \in  \rho \setminus \{B_{\ssmop{root}(u)}\}}\beta_{B} \Big)\right)   \square_{s'} \Bigg( v \otimes \bigotimes_{A\in \tau} \alpha_A \Bigg) \\
			&=&\Big((t\circ_{C_{s}}u)\circ_{B_{\ssmop{root}(u)}}v \Big) \otimes \Big(\bigotimes_{C \in  \pi\setminus \{C_{s}\}}\gamma_{c}\Big) \otimes \Big((\gamma_{C_{s}} \circ_s \beta_{B_{\ssmop{root}(u)}})\circ_{s'} \alpha_{A_{\ssmop{root}(v)}}\Big)\otimes \\
   &&\hskip 90mm \Big(\bigotimes_{B \in  \rho \setminus\{B_{\ssmop{root}(u)}\}}\beta_{B}\Big) \otimes\Big(\bigotimes_{A\in \tau \setminus \{A_{\ssmop{root}(v)}\}}\alpha_A\Big)\\
   &&\hbox{(using the fact that $s'$ is an element of the new block $C_s\sqcup_s B_{\ssmop{root}(u)}$)}\\
			&=&\Big((t\circ_{C_{s}}v) \circ_{B_{\ssmop{root}(v)}} u \Big) \otimes \Big(\bigotimes_{C \in  \pi\setminus C_{s}}\gamma_{c}\Big) \otimes \Big((\gamma_{C_{s}} \circ_{s'} \alpha_{A_{\ssmop{root}(v)}} )\circ_s \beta_{B_{\ssmop{root}(u)}}\Big)\\ 
            & & \otimes \Big(\bigotimes_{B \in  \rho \setminus \{B_{\ssmop{root}(u)}\}}\beta_{B}\Big) \otimes \Big(\bigotimes_{A\in \tau \setminus \{A_{\ssmop{root}(v)}\}} \alpha_A\Big) \\
			& & \text{(via Property \eqref{asso-suppl} for $\mop{NAP }$ and parallel associativity of $\tq$)}\\
			&=&\left(\bigg( t \otimes \displaystyle \bigotimes_{C \in \pi} \gamma_{C} \bigg) \square_{s'} \bigg( v \otimes \bigotimes_{A\in \tau} \alpha_A \bigg)\right) \square_{s} \Bigg( u \otimes \displaystyle \bigotimes_{B \in \rho} \beta_{B} \Bigg).
		\end{eqnarray*}

  Here we should precise that $t\circ_{C_{s}}(v\circ_{B_{\ssmop{root}(u)}}u)$ labellizes the vertices of our new tree with a partition $\chi$ of $S\sqcup T\sqcup R\setminus\{s,s'\}$, whose blocks are the blocks of $\pi$ except $C_s$, the blocks of $\rho$ except $B_{\ssmop{root}(u)}$, the blocks of $\tau$ except $A_{\ssmop{root}(v)}$ and the block $(C_s \sqcup B_{\ssmop{root}(u)} \sqcup A_{\ssmop{root}(v)})\setminus{\{s,s'\}}. $
  
\smallbreak
  
		\noindent - If $ C_s \ne  C_{s'}$, we have:
			\begin{eqnarray*}
				& &\left(\bigg( t \otimes \displaystyle \bigotimes_{C \in \pi} \gamma_{C} \bigg) \square_{s} \bigg( u \otimes \displaystyle \bigotimes_{B \in \rho} \beta_{B} \bigg)\right) \square_{s'} \Bigg( v \otimes \bigotimes_{A\in \tau} \alpha_A \Bigg)\\
                &=& \left( (t\circ_{C_{s}}u) \otimes \Big(\bigotimes_{C \in  \pi\setminus \{C_{s}\}}\gamma_{C}\Big) \otimes (\gamma_{C_{s}} \circ_s \beta_{B_{\ssmop{root}(u)}}) \otimes \Big(\bigotimes_{C \in  \rho \setminus \{B_{\ssmop{root}(u)}\}}\beta_{B}\Big) \right) \square_{s'} \Bigg( v \otimes \bigotimes_{A\in \tau} \alpha_A \Bigg) \\
				&=&\Big((t\circ_{C_{s}}u)\circ_{C_{s'}}v \Big) \otimes \Big(\bigotimes_{C \in  \pi\setminus\{ C_{s}, C_{s'}\}}\gamma_{C}\Big) \otimes (\gamma_{C_{s}} \circ_s \beta_{B_{\ssmop{root}(u)}})\otimes (\gamma_{C_{s'}}\circ_{s'} \alpha_{A_{\ssmop{root}(v)}}\Big)\\
    &&\otimes \Big(\bigotimes_{C \in  \rho \setminus \{B_{\ssmop{root}(u)}\}}\beta_{B}\Big)
                \otimes \Big(\bigotimes_{A\in \tau \setminus \{A_{\ssmop{root}(v)}\}} \alpha_A\Big)\\
				&=&\Big((t\circ_{C_{s'}}v)\circ_{C_{s}}u \Big) \otimes \Big(\bigotimes_{C \in  \pi\setminus \{C_{s},C_{s'}\}}\gamma_{C}\Big) \otimes (\gamma_{C_{s'}} \circ_{s'} \alpha_{A_{\ssmop{root}(v)}} )\otimes (\gamma_{C_{s}}\circ_s \beta_{B_{\ssmop{root}(u)}})\\
    &&\otimes \Big(\bigotimes_{C \in  \rho \setminus \{B_{\ssmop{root}(u)}\}}\beta_{B}\Big) \otimes \Big(\bigotimes_{A\in \tau \setminus \{A_{\ssmop{root}(v)}\} }\alpha_A\Big) 
            \end{eqnarray*}
			via parallel associativity of $\mop{NAP}$. Hence,
            \begin{eqnarray*}
            & &\left(\bigg( t \otimes \displaystyle \bigotimes_{C \in \pi} \gamma_{C} \bigg) \square_{s} \bigg( u \otimes \displaystyle \bigotimes_{B \in \rho} \beta_{B} \bigg)\right) \square_{s'} \Bigg( v \otimes \bigotimes_{A\in \tau} \alpha_A \Bigg)\\
				&=&\left(\bigg( t \otimes \displaystyle \bigotimes_{C \in \pi} \gamma_{C} \bigg) \square_{s'} \bigg( v \otimes \bigotimes_{A\in \tau} \alpha_A \bigg)\right) \square_{s} \Bigg( u \otimes \displaystyle \bigotimes_{B \in \rho} \beta_{B} \Bigg).
			\end{eqnarray*}
		\item Nested associativity: Two subcases again arise.\\
  
		\noindent - If $b \in B_{\smop{root}(u)}$, we have:
		\begin{eqnarray*}
			& &\left(\bigg( t \otimes \displaystyle \bigotimes_{C \in \pi} \gamma_{C} \bigg) \square_{s} \bigg( u \otimes \displaystyle \bigotimes_{B \in \rho} \beta_{B} \bigg)\right) \square_{b} \Bigg( v \otimes \bigotimes_{A\in \tau} \alpha_A \Bigg)\\
           &=& \left( (t\circ_{C_{s}}u) \otimes \Big(\bigotimes_{C \in  \pi\setminus \{C_{s}\}}\gamma_{C}\Big) \otimes (\gamma_{C_{s}} \circ_s \beta_{B_{\ssmop{root}(u)}}) \otimes \Big(\bigotimes_{C \in  \rho \setminus \{B_{\ssmop{root}(u)}\}}\beta_{B}\Big) \right) \square_{b} \Bigg( v \otimes \bigotimes_{A\in \tau} \alpha_A \Bigg) \\
			&=&\Big((t\circ_{C_{s}}u)\circ_{B_{b}}v \Big) \otimes \Big(\bigotimes_{C \in  \pi\setminus \{C_{s}\}}\gamma_{C} \Big)\otimes \Big((\gamma_{C_{s}} \circ_s \beta_{B_{\ssmop{root}(u)}})\circ_{b} \alpha_{A_{\ssmop{root}(v)}}\Big) \otimes \Big(\bigotimes_{C \in  \rho \setminus \{B_{\ssmop{root}(u)}\}}\beta_{B}\Big) \\
   &&\hskip 100mm \otimes\Big(\bigotimes_{A\in \tau \setminus \{A_{\ssmop{root}(v)}\}} \alpha_A\Big)\\
			&=&\Big(t\circ_{C_{s}}(v\circ_{B_{\ssmop{root}(u)}u)} \Big) \otimes \Big(\bigotimes_{C \in  \pi\setminus \{C_{s}\}}\gamma_{C}\Big) \otimes \Big(\gamma_{C_{s}} \circ_{s} (\alpha_{A_{\ssmop{root}(v)}} \circ_b \beta_{B_{\ssmop{root}(u)}})\Big) \otimes \Big(\bigotimes_{C \in  \rho \setminus \{B_{\ssmop{root}(u)}\}}\beta_{B}\Big)\\
   &&\hskip 100mm\otimes \Big(\bigotimes_{A\in \tau \setminus \{A_{\ssmop{root}(v)}\}} \alpha_A\Big) \\
			&&\text{via nested associativity of $\mop{NAP}$ and $\tq$}\\
			&=&\Bigg( t \otimes \displaystyle \bigotimes_{C \in \pi} \gamma_{C} \Bigg) \square_{s} \left(\bigg( v \otimes \bigotimes_{A\in \tau} \alpha_A \bigg) \square_{b} \bigg( u \otimes \displaystyle \bigotimes_{B \in \rho} \beta_{B} \bigg)\right).
		\end{eqnarray*}
	- If $b\in B_b \ne B_r$, we have
	\begin{eqnarray*}
		& &\left(\bigg( t \otimes \displaystyle \bigotimes_{C \in \pi} \gamma_{C} \bigg) \square_{s} \bigg( u \otimes \displaystyle \bigotimes_{B \in \rho} \beta_{B} \bigg)\right) \square_{b} \Bigg( v \otimes \bigotimes_{A\in \tau} \alpha_A \Bigg)\\
        &=& \left( (t\circ_{C_{s}}u) \otimes \Big(\bigotimes_{C \in  \pi\setminus \{C_{s}\}}\gamma_{C}\Big) \otimes (\gamma_{C_{s}} \circ_s \beta_{B_{\ssmop{root}(u)}}) \otimes \Big(\bigotimes_{C \in  \rho \setminus \{B_{\ssmop{root}(u)}, B_b\}}\beta_{B}\Big) \right)  \square_{b} \Bigg( v \otimes \bigotimes_{A\in \tau} \alpha_A \Bigg) \\
		&=&\Big((t\circ_{C_{s}}u)\circ_{B_{b}}v \Big) \otimes \Big(\bigotimes_{C \in  \pi\setminus\{ C_{s}\}}\gamma_{C}\Big) \otimes (\gamma_{C_{s}} \circ_s \beta_{B_{\ssmop{root}(u)}})\otimes (\beta_{B_{b}}\circ_{b} \alpha_{A_{\ssmop{root}(v)}}\Big)\otimes \Big(\bigotimes_{C \in  \rho \setminus\{B_{\ssmop{root}(u)},B_b\}}\beta_{B}\Big)\\
        & &\otimes \Big(\bigotimes_{A\in \tau \setminus \{A_{\ssmop{root}(v)}\}} \alpha_A\Big)
        \end{eqnarray*}
        \noindent then
        \begin{eqnarray*}
        & &\left(\bigg( t \otimes \displaystyle \bigotimes_{C \in \pi} \gamma_{C} \bigg) \square_{s} \bigg( u \otimes \displaystyle \bigotimes_{B \in \rho} \beta_{B} \bigg)\right) \square_{b} \Bigg( v \otimes \bigotimes_{A\in \tau} \alpha_A \Bigg)\\
		&=&\Big(t\circ_{C_{s}}(v\circ_{B_{b}}u) \Big) \otimes \Big(\bigotimes_{C \in  \pi\setminus \{C_{s}\}}\gamma_{C}\Big) \otimes (\beta_{B_{b}} \circ_{b} \alpha_{A_{\ssmop{root}(v)}} )\otimes (\gamma_{C_{s}}\circ_s \beta_{B_{\ssmop{root}(u)}}) \otimes \Big(\bigotimes_{B \in  \rho \setminus \{B_{\ssmop{root}(u)},B_b\}}\beta_{B}\Big)\\
        & &\otimes \Big(\bigotimes_{A\in \tau \setminus \{A_{\ssmop{root}(v)}\}} \alpha_A\Big) \\
		&&\text{(via nested associativity of $\mop{NAP}$)}\\
		&=&\Bigg( t \otimes \displaystyle \bigotimes_{C \in \pi} \gamma_{C} \Bigg) \square_{s} \left(\bigg( v \otimes \bigotimes_{A\in \tau} \alpha_A \bigg) \square_{b} \bigg( u \otimes \displaystyle \bigotimes_{B \in \rho} \beta_{B} \bigg)\right).
	\end{eqnarray*}
 \end{itemize}
        \item Naturality:
        Let $I=S\sqcup T\sqcup R$,  $\pi \vdash S$ and $ \rho \vdash T$. Fix an element $s\in S$ and let $C_{s}$ the block of $\pi$ containing $s$. Consider $\sigma_1:S \to S'$ and $\sigma_2: T \to T'$ two bijections and let $\sigma:=\sigma_1\restr{S\setminus\{s\}} \sqcup \sigma_2$. \\
        If $(t \otimes \displaystyle \bigotimes_{C \in \pi} \gamma_C) \in \mop{NAP}\circ q [S] $ and $(u \otimes \displaystyle \bigotimes_{B \in \rho} \beta_B) \in \mop{NAP}\circ q [T] $, we have 
        \begin{eqnarray*}
            & &\big(\mop{NAP} \circ q\big)[\sigma]\left(\bigg(t \otimes \displaystyle \bigotimes_{C \in \pi} \gamma_C\bigg) \square_s \bigg(u \otimes \displaystyle \bigotimes_{B \in \rho} \beta_B\bigg) \right)\\&=& \big( \mop{NAP}\circ q\big)[\sigma]\left( (t\circ_{C_{s}}u) \otimes \Big(\bigotimes_{C \in  \pi\setminus \{C_{s}\}}\gamma_{C}\Big) \otimes (\gamma_{C_{s}} \circ_s \beta_{B_{\ssmop{root}(u)}}) \otimes \Big(\bigotimes_{C \in  \rho \setminus \{B_{\ssmop{root}(u)}, B_b\}}\beta_{B}\Big) \right)\\
            &=& \mop{NAP} [\sigma] (t\circ_{C_{s}}u) \otimes q[\sigma]\left( \Big(\bigotimes_{C \in  \pi\setminus \{C_{s}\}}\gamma_{C}\Big) \otimes (\gamma_{C_{s}} \circ_s \beta_{B_{\ssmop{root}(u)}}) \otimes \Big(\bigotimes_{C \in  \rho \setminus\{B_{\ssmop{root}(u)}, B_b\}}\beta_{B}\Big) \right)\\
            &=&\Big(NAP [\sigma_1]( t)\circ_{C_{s}} \mop{NAP} [\sigma_2]( u) \Big) \otimes\\
            & & \left( q[\sigma_1]\Big(\bigotimes_{C \in  \pi\setminus \{C_{s}\}}\gamma_{C}\Big) \otimes (q[\sigma_1](\gamma_{C_{s}}) \circ_s q[\sigma_2](\beta_{B_{\ssmop{root}(u)}}))  \otimes q[\sigma_2]\Big(\bigotimes_{C \in  \rho \setminus \{B_{\ssmop{root}(u)}, B_b\}}\beta_{B}\Big) \right)\\
            & &\text{(via ($N_1$) of $\mop{NAP}$ and q)}\\
            &=&\big(\mop{NAP} \circ q\big)[\sigma_1] \Bigg(t \otimes \displaystyle \bigotimes_{C \in \pi} \gamma_C\Bigg) \square_s \big(\mop{NAP} \circ q\big)[\sigma_2] \Bigg(u \otimes \displaystyle \bigotimes_{B \in \rho} \beta_B\Bigg) 
        \end{eqnarray*}
        \vspace{.1in}
         This proves $(N_{1})$. For $(N_{2})$, consider $s_{1}, s_{2} \in S$ and $\sigma : \{s_{1}\} \to \{s_{2}\}$ the function given by $\sigma(s_{1}) = s_{2}$. If $u_{s_{1}} \in (\mop{NAP} \circ \tq)[{s_{1}}] = \tq[\{s_{1}\}]$ and $u_{s_{2}}\in
   (\mop{NAP} \circ \tq)[\{s_{2}\}] = \tq[\{s_{2}\}]$, we have
   $$(\mop{NAP} \circ \tq)[\sigma](u_{s_{1}}) = \tq[\sigma](u_{s_{1}}) = u_{s_{2}},$$
   where the second equality is given by the naturality property $(N_{2})$ of $\circ$.
   \vspace{.1in}
   \item Unitality: let $s, t \in I.$ Since $(\mop{NAP} \circ \tq)[\{s\}] = \tq[\{s\}]$ and $(\mop{NAP} \circ \tq)[\{t\}] = \tq[\{t\}]$, the unitality properties $(U_{1})$ and $(U_{2})$ for $\square$ come directly from the ones for $\circ$.
	\end{itemize}
		\end{proof}

  \begin{remark}
      The $\mop{\sc Pre-Lie}$ operad does not satisfy property \eqref{asso-suppl}. It therefore seems impossible to define an operad structure on $\mop{\sc Pre-Lie} \circ \tq$ along these lines.
  \end{remark}  
  \section{\bf{The magmatic operad {\sc Mag}}}
  \subsection{Planar trees}\label{sec 6.1}
A planar binary tree is a finite directed tree with an insertion in the plane, such that all vertices have exactly two incoming edges and one outgoing edge. An edge can be internal (connecting two vertices) or external (with a free end). External edges are leaves. The root is the only edge that does not end in a vertex.
\begin{deft}
     Let $t_1$, $t_2 \in \mathcal{T}_{\smop{pl}}$. We define the planar tree $t_3:=t_1 \vee t_2$ by considering the unique planar binary tree with two leaves (the Y-shaped tree), replacing the left branch by $t_1$ and the right branch with $t_2$.
 \end{deft}
 The pair $(\mathcal{T}_{\smop{pl}},\vee)$ is the free magma generated by a single element, namely the one-edge tree. Knuth’s rotation correspondence between planar trees and planar rooted trees is defined by the bijection $\Phi:\mathcal{T}_{Pl} \to \mathcal{T}$ recursively given by $\Phi(|):=\bullet$ and $\Phi(t_1 \vee t_2) $ is the grafting of $t_1$ on the root of $t_2$ at the left.\\
 
Now let us denoting by $\mop{\sc Mag}$ the species of labeled planar rooted trees. For example,
 \[\mop{\sc Mag}[\{\emptyset\}]=\{0\}, \hskip 4mm \mop{\sc Mag}[\{*\}]=\left<\{\!\entoure *\}\right>,\hskip 4mm \mop{\sc Mag} [\{1,2\}]=\left<\left\{\stick 12,\ \stick 21\right\}\right>,\]
 \vskip 4mm
 \[\mop{\sc Mag} [\{1,2,3\}]=
  \left<\scalebox{0.75}{\ladder 123, \ladder 213,  \ladder 132,  \ladder 312, \ladder 231,\ladder 321, \hskip 3mm \cherry 123 ,\hskip 4mm \cherry 231, \hskip 4mm \cherry 312\hskip 4mm, \cherry 132, \hskip 4mm \cherry 213, \hskip 4mm \cherry 321\hskip 3mm}\right>.\]
 We are now ready to recall the operad structure on $\mop{\sc Mag}$:
\subsection{The $\mop{\sc Mag}$ operad}\label{sec 6.2}
 Let $I$ be a finite set, $I=S \sqcup T$, and $s \in S$. We define the partial composition as follows: $$\circ_{s}:\mop{\sc Mag}[S] \otimes \mop{\sc Mag}[T] \longrightarrow \mop{\sc Mag}[S \sqcup_{s} T],$$ such that for $u \in \mop{Mag}[S]$ and $v \in \mop{\sc Mag}[T]$, the partial composition $u \circ_{s} v$ is the tree of $\mop{\sc Mag}[S \sqcup_{s} T]$ obtained by replacing the vertex $s$ of $u$ by the tree $v$ and connecting each edge of $\mop{In}(s,u)$ at the root of $v$ by respecting the order the elements of $\mop{In}(r,v)$ are connecting in the left on the root $r$ of $v$, then we connect the elements of $\mop{In}(s,u)$. For example,\\

  \vskip 9mm
  \[\igrec 1234\hskip 4mm\circ_2\hskip 4mm\ladder abc\hskip 4mm =\hskip 4mm \treeones 1ab43c .\]
  \vskip 2mm

\
  
  \subsection{New operad structure on $\mop{\sc Mag}$}\label{sec 6.3}
In contrast with the $\mop{\sc NAP}$ operad, the $\mop{\sc Mag}$ operad does not verify \eqref{asso-suppl}.\\

Let $A$ and $B$ be two totally ordered sets. An $(A,B)$-shuffle is a permutation $\sigma$ of $A \sqcup B$ which preserves the total order of $A$ and $B$. Let $\mop{sh}(A,B)$ be the set of all $A,B$-shuffles. Its cardinal is \[|\mop{sh}(A,B)|={|A|+|B|\choose |A|}=\frac{(|A|+|B|)!}{|A|!|B|!}.\]
More generally, let $A_1,A_2,...,A_k$ be $k$ totally ordered sets. An $(A_1,A_2,...,A_k)$-shuffle is a permutation $\sigma$ of $A_1\sqcup A_2\sqcup ...\sqcup A_k$ which preserves the total order of each $A_i$, $i \in {1,2,...,k}$. The cardinal of the set $\mop{Sh}(A_1,A_2,...,A_k)$ of $(A_1,\ldots,A_k)$-shuffles is the multinomial coefficient 
\[\frac{(|A_1|+\cdots +|A_k|)!}{|A_1|!\cdots |A_k|!}.\]

Let $I=S \sqcup T$, let $t \in \mop{\sc Mag} [S]$ and $u \in \mop{\sc Mag} [T]$. Let $s$ be a fixed vertex of $t$. The partial composition
\[ \triangle_{s}:\mop{\sc Mag} [S] \otimes \mop{\sc Mag}[T] \longrightarrow \mop{\sc Mag} [S \sqcup_s T] \]
is defined as follows: for $t \in \mop{\sc Mag} [S]$ and $ u \in \mop{\sc Mag} [T]$ we have
$$t \triangle_s u := \displaystyle \sum_H t \circ_s^{\sc H} u$$
where $H$ runs over the set of $\big(\mop{In}(s,t), \mop{In}(r,u)\big)$-shuffles. Here $\mop{In(s,t)}$ is the set of branches arriving at the chosen vertex $s$ of $t$ ordered from left to right, and $\mop{In}(r,u)$ is the set of branches arriving at the root of $u$, also ordered from left to right. The tree $t \circ_s^{\small{H}} u\in \mop{\sc Mag} [S \sqcup_s T]$ is obtained by replacing the vertex $s$ of $t$ by the tree $u$ and connecting each edge in $H$ at the root of $u$ from left to right.

\begin{example}
\[
\begin{tikzpicture}[x=0.75pt,y=0.75pt,yscale=-1,xscale=1]

\draw   (36.75,139.01) .. controls (36.75,133.29) and (39.99,128.66) .. (43.99,128.66) .. controls (47.99,128.66) and (51.23,133.29) .. (51.23,139.01) .. controls (51.23,144.73) and (47.99,149.37) .. (43.99,149.37) .. controls (39.99,149.37) and (36.75,144.73) .. (36.75,139.01) -- cycle ;
\draw   (26.92,82.34) .. controls (26.92,76.62) and (29.85,71.98) .. (33.47,71.98) .. controls (37.09,71.98) and (40.02,76.62) .. (40.02,82.34) .. controls (40.02,88.06) and (37.09,92.7) .. (33.47,92.7) .. controls (29.85,92.7) and (26.92,88.06) .. (26.92,82.34) -- cycle ;
\draw   (50.83,83.49) .. controls (50.83,77.77) and (54.07,73.13) .. (58.07,73.13) .. controls (62.07,73.13) and (65.32,77.77) .. (65.32,83.49) .. controls (65.32,89.21) and (62.07,93.85) .. (58.07,93.85) .. controls (54.07,93.85) and (50.83,89.21) .. (50.83,83.49) -- cycle ;
\draw   (37.15,108.81) .. controls (37.15,103.09) and (40.39,98.45) .. (44.39,98.45) .. controls (48.39,98.45) and (51.63,103.09) .. (51.63,108.81) .. controls (51.63,114.53) and (48.39,119.16) .. (44.39,119.16) .. controls (40.39,119.16) and (37.15,114.53) .. (37.15,108.81) -- cycle ;
\draw    (44,129) -- (44,119) ;
\draw    (50,101) -- (54,94) ;
\draw    (37,89) -- (42,99) ;
\draw   (83.32,139.01) .. controls (83.32,133.29) and (86.56,128.66) .. (90.56,128.66) .. controls (94.56,128.66) and (97.81,133.29) .. (97.81,139.01) .. controls (97.81,144.73) and (94.56,149.37) .. (90.56,149.37) .. controls (86.56,149.37) and (83.32,144.73) .. (83.32,139.01) -- cycle ;
\draw   (84.78,81.19) .. controls (84.78,75.47) and (87.71,70.83) .. (91.33,70.83) .. controls (94.94,70.83) and (97.88,75.47) .. (97.88,81.19) .. controls (97.88,86.91) and (94.94,91.54) .. (91.33,91.54) .. controls (87.71,91.54) and (84.78,86.91) .. (84.78,81.19) -- cycle ;
\draw   (83.72,108.81) .. controls (83.72,103.09) and (86.97,98.45) .. (90.97,98.45) .. controls (94.97,98.45) and (98.21,103.09) .. (98.21,108.81) .. controls (98.21,114.53) and (94.97,119.16) .. (90.97,119.16) .. controls (86.97,119.16) and (83.72,114.53) .. (83.72,108.81) -- cycle ;
\draw    (91,129) -- (91,119) ;
\draw    (91,98) -- (91,92) ;
\draw   (141.61,142.01) .. controls (141.61,136.29) and (144.85,131.66) .. (148.86,131.66) .. controls (152.86,131.66) and (156.1,136.29) .. (156.1,142.01) .. controls (156.1,147.73) and (152.86,152.37) .. (148.86,152.37) .. controls (144.85,152.37) and (141.61,147.73) .. (141.61,142.01) -- cycle ;
\draw   (127.42,91.09) .. controls (127.42,85.37) and (130.35,80.74) .. (133.97,80.74) .. controls (137.59,80.74) and (140.52,85.37) .. (140.52,91.09) .. controls (140.52,96.81) and (137.59,101.45) .. (133.97,101.45) .. controls (130.35,101.45) and (127.42,96.81) .. (127.42,91.09) -- cycle ;
\draw   (143.69,83.04) .. controls (143.69,77.32) and (146.93,72.68) .. (150.93,72.68) .. controls (154.93,72.68) and (158.18,77.32) .. (158.18,83.04) .. controls (158.18,88.76) and (154.93,93.39) .. (150.93,93.39) .. controls (146.93,93.39) and (143.69,88.76) .. (143.69,83.04) -- cycle ;
\draw   (142.01,111.81) .. controls (142.01,106.09) and (145.26,101.45) .. (149.26,101.45) .. controls (153.26,101.45) and (156.5,106.09) .. (156.5,111.81) .. controls (156.5,117.53) and (153.26,122.16) .. (149.26,122.16) .. controls (145.26,122.16) and (142.01,117.53) .. (142.01,111.81) -- cycle ;
\draw    (149,131) -- (149,121) ;
\draw    (149,101) -- (149,93) ;
\draw    (138,99) -- (142,110) ;
\draw   (126.72,64.87) .. controls (126.72,59.15) and (129.65,54.51) .. (133.27,54.51) .. controls (136.88,54.51) and (139.82,59.15) .. (139.82,64.87) .. controls (139.82,70.59) and (136.88,75.23) .. (133.27,75.23) .. controls (129.65,75.23) and (126.72,70.59) .. (126.72,64.87) -- cycle ;
\draw   (161.7,87.64) .. controls (161.7,81.92) and (164.94,77.28) .. (168.94,77.28) .. controls (172.94,77.28) and (176.19,81.92) .. (176.19,87.64) .. controls (176.19,93.36) and (172.94,98) .. (168.94,98) .. controls (164.94,98) and (161.7,93.36) .. (161.7,87.64) -- cycle ;
\draw    (167,98) -- (156,109) ;
\draw    (134,81) -- (134,75) ;
\draw   (197.61,142.01) .. controls (197.61,136.29) and (200.85,131.66) .. (204.86,131.66) .. controls (208.86,131.66) and (212.1,136.29) .. (212.1,142.01) .. controls (212.1,147.73) and (208.86,152.37) .. (204.86,152.37) .. controls (200.85,152.37) and (197.61,147.73) .. (197.61,142.01) -- cycle ;
\draw   (183.42,91.09) .. controls (183.42,85.37) and (186.35,80.74) .. (189.97,80.74) .. controls (193.59,80.74) and (196.52,85.37) .. (196.52,91.09) .. controls (196.52,96.81) and (193.59,101.45) .. (189.97,101.45) .. controls (186.35,101.45) and (183.42,96.81) .. (183.42,91.09) -- cycle ;
\draw   (199.69,83.04) .. controls (199.69,77.32) and (202.93,72.68) .. (206.93,72.68) .. controls (210.93,72.68) and (214.18,77.32) .. (214.18,83.04) .. controls (214.18,88.76) and (210.93,93.39) .. (206.93,93.39) .. controls (202.93,93.39) and (199.69,88.76) .. (199.69,83.04) -- cycle ;
\draw   (198.01,111.81) .. controls (198.01,106.09) and (201.26,101.45) .. (205.26,101.45) .. controls (209.26,101.45) and (212.5,106.09) .. (212.5,111.81) .. controls (212.5,117.53) and (209.26,122.16) .. (205.26,122.16) .. controls (201.26,122.16) and (198.01,117.53) .. (198.01,111.81) -- cycle ;
\draw    (205,132) -- (205,122) ;
\draw    (205,101) -- (205,94) ;
\draw    (194,98) -- (198,110) ;
\draw   (217.7,87.64) .. controls (217.7,81.92) and (220.94,77.28) .. (224.94,77.28) .. controls (228.94,77.28) and (232.19,81.92) .. (232.19,87.64) .. controls (232.19,93.36) and (228.94,98) .. (224.94,98) .. controls (220.94,98) and (217.7,93.36) .. (217.7,87.64) -- cycle ;
\draw    (223,98) -- (212,108) ;
\draw   (253.61,144.01) .. controls (253.61,138.29) and (256.85,133.66) .. (260.86,133.66) .. controls (264.86,133.66) and (268.1,138.29) .. (268.1,144.01) .. controls (268.1,149.73) and (264.86,154.37) .. (260.86,154.37) .. controls (256.85,154.37) and (253.61,149.73) .. (253.61,144.01) -- cycle ;
\draw   (239.42,93.09) .. controls (239.42,87.37) and (242.35,82.74) .. (245.97,82.74) .. controls (249.59,82.74) and (252.52,87.37) .. (252.52,93.09) .. controls (252.52,98.81) and (249.59,103.45) .. (245.97,103.45) .. controls (242.35,103.45) and (239.42,98.81) .. (239.42,93.09) -- cycle ;
\draw   (255.69,85.04) .. controls (255.69,79.32) and (258.93,74.68) .. (262.93,74.68) .. controls (266.93,74.68) and (270.18,79.32) .. (270.18,85.04) .. controls (270.18,90.76) and (266.93,95.39) .. (262.93,95.39) .. controls (258.93,95.39) and (255.69,90.76) .. (255.69,85.04) -- cycle ;
\draw   (254.01,113.81) .. controls (254.01,108.09) and (257.26,103.45) .. (261.26,103.45) .. controls (265.26,103.45) and (268.5,108.09) .. (268.5,113.81) .. controls (268.5,119.53) and (265.26,124.16) .. (261.26,124.16) .. controls (257.26,124.16) and (254.01,119.53) .. (254.01,113.81) -- cycle ;
\draw    (261,134) -- (261,124) ;
\draw    (261,103) -- (261,96) ;
\draw    (250,100) -- (254,112) ;
\draw   (273.7,89.64) .. controls (273.7,83.92) and (276.94,79.28) .. (280.94,79.28) .. controls (284.94,79.28) and (288.19,83.92) .. (288.19,89.64) .. controls (288.19,95.36) and (284.94,100) .. (280.94,100) .. controls (276.94,100) and (273.7,95.36) .. (273.7,89.64) -- cycle ;
\draw    (279,99) -- (268,112) ;
\draw   (202.72,48.87) .. controls (202.72,43.15) and (205.65,38.51) .. (209.27,38.51) .. controls (212.88,38.51) and (215.82,43.15) .. (215.82,48.87) .. controls (215.82,54.59) and (212.88,59.23) .. (209.27,59.23) .. controls (205.65,59.23) and (202.72,54.59) .. (202.72,48.87) -- cycle ;
\draw    (209,72) -- (209,59) ;
\draw   (273.72,59.87) .. controls (273.72,54.15) and (276.65,49.51) .. (280.27,49.51) .. controls (283.88,49.51) and (286.82,54.15) .. (286.82,59.87) .. controls (286.82,65.59) and (283.88,70.23) .. (280.27,70.23) .. controls (276.65,70.23) and (273.72,65.59) .. (273.72,59.87) -- cycle ;
\draw    (280,79) -- (280,70) ;

\draw (38,133) node [anchor=north west][inner sep=0.75pt]    {$1$};
\draw (39,101) node [anchor=north west][inner sep=0.75pt]    {$2$};
\draw (28,75) node [anchor=north west][inner sep=0.75pt]    {$3$};
\draw (52,75) node [anchor=north west][inner sep=0.75pt]    {$4$};
\draw (53,120) node [anchor=north west][inner sep=0.75pt]    {$\triangle _{2}$};
\draw (85,133) node [anchor=north west][inner sep=0.75pt]    {$a$};
\draw (85,101) node [anchor=north west][inner sep=0.75pt]    {$b$};
\draw (86,75) node [anchor=north west][inner sep=0.75pt]    {$c$};
\draw (104,124) node [anchor=north west][inner sep=0.75pt]    {$=$};
\draw (143,136) node [anchor=north west][inner sep=0.75pt]    {$1$};
\draw (129,82) node [anchor=north west][inner sep=0.75pt]    {$b$};
\draw (145,77) node [anchor=north west][inner sep=0.75pt]    {$3$};
\draw (163,80) node [anchor=north west][inner sep=0.75pt]    {$4$};
\draw (128,59) node [anchor=north west][inner sep=0.75pt]    {$c$};
\draw (143,106) node [anchor=north west][inner sep=0.75pt]    {$a$};
\draw (199,136) node [anchor=north west][inner sep=0.75pt]    {$1$};
\draw (185,84) node [anchor=north west][inner sep=0.75pt]    {$3$};
\draw (202,77) node [anchor=north west][inner sep=0.75pt]    {$b$};
\draw (219,80) node [anchor=north west][inner sep=0.75pt]    {$4$};
\draw (199,106) node [anchor=north west][inner sep=0.75pt]    {$a$};
\draw (256,137) node [anchor=north west][inner sep=0.75pt]    {$1$};
\draw (241,86) node [anchor=north west][inner sep=0.75pt]    {$3$};
\draw (258,79) node [anchor=north west][inner sep=0.75pt]    {$4$};
\draw (275,82) node [anchor=north west][inner sep=0.75pt]    {$b$};
\draw (255,108) node [anchor=north west][inner sep=0.75pt]    {$a$};
\draw (204,44) node [anchor=north west][inner sep=0.75pt]    {$c$};
\draw (275,55) node [anchor=north west][inner sep=0.75pt]    {$c$};
\draw (165,122) node [anchor=north west][inner sep=0.75pt]    {$+$};
\draw (225,122) node [anchor=north west][inner sep=0.75pt]    {$+$};
\end{tikzpicture}
\]
\end{example}

\medbreak

\begin{theorem}
    $(\mop{\sc Mag},\triangle)$ is an operad, which moreover verifies the additional condition \eqref{asso-suppl}.
\end{theorem}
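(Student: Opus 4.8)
The plan is to verify the five axiom groups for $\triangle$ together with property \eqref{asso-suppl} by reducing each statement, term by term, to the corresponding fact for the classical $\mop{\sc Mag}$ operad $(\mop{\sc Mag},\circ)$, the only new ingredient being a bookkeeping lemma about shuffles. It suffices to work with basis elements, so fix planar rooted trees $t\in\mop{\sc Mag}[S]$, $u\in\mop{\sc Mag}[T]$, $v\in\mop{\sc Mag}[R]$, write $r_u,r_v$ for the roots of $u,v$, and abbreviate $A=\mop{In}(s,t)$, $B=\mop{In}(r_u,u)$, $C=\mop{In}(r_v,v)$, all viewed as totally ordered sets, so that $t\triangle_s u=\sum_{H\in\mop{sh}(A,B)}t\circ_s^H u$.

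First I would record the key combinatorial fact, an \emph{iterated shuffle lemma}: for totally ordered sets $A,B,C$ the map $(H,K)\mapsto K$ is a bijection from $\{(H,K):H\in\mop{sh}(A,B),\ K\in\mop{sh}(H,C)\}$ onto the set $\mop{Sh}(A,B,C)$ of orderings of $A\sqcup B\sqcup C$ preserving each of the three suborders, with inverse $K\mapsto(K|_{A\cup B},K)$; and likewise with the three factors permuted. In particular $\mop{Sh}(A,B,C)$ and $\mop{Sh}(A,C,B)$ are literally the same set of orderings. This is just associativity of the shuffle, proved in one line, and it is what makes the two-stage shuffles produced by iterated compositions collapse to a single simultaneous shuffle.

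Then I would go through the axioms. For \textbf{parallel associativity} with $s\ne s'$ vertices of $t$: the sets $\mop{In}(s,t)$ and $\mop{In}(s',t)$ are disjoint, grafting at $s$ leaves the ordered set $\mop{In}(s',t)$ unchanged, and the two shuffles involved are independent, so both $(t\triangle_s u)\triangle_{s'}v$ and $(t\triangle_{s'}v)\triangle_s u$ expand as a double sum over the same index set, with corresponding terms equal by parallel associativity of $\circ$ applied to the relevant planar trees. For \textbf{nested associativity} $(t\triangle_s u)\triangle_b v=t\triangle_s(u\triangle_b v)$, $b$ a vertex of $u$: if $b\ne r_u$ the same "independent shuffles" argument works, using nested associativity of $\circ$ termwise (the grafting of $v$ at $b$ and of $u$ at $s$ occur at disjoint sites); if $b=r_u$, this is the heart of the matter, and I would expand both sides to $\sum_{H\in\mop{sh}(A,B)}\sum_{K\in\mop{sh}(H,C)}(t\circ_s^H u)\circ_{r_u}^K v$ and $\sum_{L\in\mop{sh}(B,C)}\sum_{M\in\mop{sh}(A,L)}t\circ_s^M(u\circ_{r_u}^L v)$ respectively — here using that in $t\circ_s^H u$ the ordered edge set arriving at $r_u$ has become $H$ itself — then invoke the iterated shuffle lemma to reindex both double sums by $\mop{Sh}(A,B,C)$, and finally check that for a fixed shuffle the two resulting trees coincide: both are obtained from $t$ by putting $r_v$ where $s$ sat, attaching the branches of $A\sqcup B\sqcup C$ at $r_v$ in the prescribed order, the branch from $A$ (resp. $B$, $C$) carrying its original subtree from $t$ (resp. $u$, $v$). \textbf{Naturality} and \textbf{unitality} are immediate: relabellings preserve the planar (left-to-right) order at each vertex, hence commute with the shuffle sums and reduce to $(N1)$–$(N2)$ for $\circ$; and since the unit $u_\star$ is the one-vertex tree, whose root has empty incoming edge set, only the empty shuffle occurs, giving $u_\star\triangle_\star x=u_\star\circ_\star x=x$ and $x\triangle_s u_s=x\circ_s u_s=x$. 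Finally, for \eqref{asso-suppl}, the very same expansion as in the case $b=r_u$ shows $(t\triangle_s u)\triangle_{r_u}v$ is indexed by $\mop{Sh}(A,B,C)$ and $(t\triangle_s v)\triangle_{r_v}u$ by $\mop{Sh}(A,C,B)=\mop{Sh}(A,B,C)$; applying $\mop{\sc Mag}[\varphi_{uv}]$ to the latter replaces the root of $u$ by the root of $v$, after which a term-by-term comparison at each fixed shuffle identifies the two sides, exactly as for $\mop{NAP}$.

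The step I expect to be the main obstacle is purely organizational: tracking precisely which totally ordered edge sets get shuffled at each stage of an iterated composition — especially the facts that after $t\circ_s^H u$ the incoming edges at $r_u$ form the ordered set $H$, that grafting at a vertex $\ne r_u$ leaves $\mop{In}(r_u,u)$ intact, and that grafting at an internal vertex $s$ eventually replaces $r_u$ by $r_v$ — and then seeing that the iterated shuffle lemma applies verbatim. Once that lemma and this bookkeeping are settled, every axiom collapses to the matching $\mop{\sc Mag}$ axiom applied termwise, with no further genuine computation.
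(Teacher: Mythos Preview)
Your proposal is correct and follows essentially the same approach as the paper: both arguments reduce the nested associativity at the root to a single sum over triple shuffles $\mop{Sh}(A,B,C)$, and treat the remaining cases by independence of the two shuffle operations. Your version is in fact more explicit than the paper's, since you isolate the iterated shuffle bijection as a lemma and also address naturality and unitality, which the paper leaves implicit.
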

\begin{proof}
    Let $I=S \sqcup T \sqcup R$, let $t \in \mop{\sc Mag} [S]$, let $u \in \mop{\sc Mag} [T]$ and $v \in \mop{\sc Mag} [R]$. Let $s$, $s'$ be two distinct fixed vertices of $t$ and $m$ a fixed vertices of $u$.
    Let $H$ be a $\big(\mop{In}(s,t), \mop{In}(r,u)\big)$-shuffle, $H'$ be a $\big(\mop{In}(s',t), \mop{In}(r,v)\big)$-shuffle and $K$ be a $\big(\mop{In}(m,u), \mop{In}(r,v)\big)$-shuffle. 
    \begin{itemize}
        \item Parallel associativity:
        \begin{eqnarray*}
            & &(t \triangle_s u) \triangle_{s'} v \\&=& \Big( \displaystyle \sum_{H\in\smop{Sh}(\smop{In}(s,t), \,\smop{In}(r,u))} t \circ_s^H u \Big) \triangle_{s'} v\\
            &=&\displaystyle \sum_{H'\in\smop{Sh}(\smop{In}(s',t), \,\smop{In}(r,v))} \ \displaystyle \sum_{H\in\smop{Sh}(\smop{In}(s,t), \,\smop{In}(r,u))} \big((t \circ_s^H u)\circ_{s'}^{H'}v \big)\\
            &=& \displaystyle \sum_{H\in\smop{Sh}(\smop{In}(s,t), \,\smop{In}(r,u))} \ \displaystyle \sum_{H'\in\smop{Sh}(\smop{In}(s',t),\, \smop{In}(r,v))} \big((t \circ_{s'}^{H'}v)\circ_s^H u \big)\\
            &&\hbox{(by independence of shuffle operations at $s$ and $s'$)}\\
            &=&(t \triangle_{s'} v) \triangle_s u.
        \end{eqnarray*}
        \item  Nested associativity. Two subcases must be considered.
        \begin{itemize}
            \item If $m$ is the root of the tree $u$,
            \begin{eqnarray*}
                (t \triangle_s u) \triangle_{m} v &=& \Big( \displaystyle \sum_{H\in\smop{Sh}(\smop{In}(s,t), \,\smop{In}(m,u))} t \circ_s^H u \Big) \triangle_{m} v\\
                &=&\displaystyle \sum_{H\in\smop{Sh}(\smop{In}(s,t),\, \smop{In}(m,u))}\ \displaystyle \sum_{K\in\smop{Sh}(\smop{In}(m,t \circ_s^H u), \,\smop{In}(r,v))} (t \circ_s^H u)\circ_{m}^{K}v \\
                &=& \displaystyle \sum_{K'\in\smop{Sh}(\smop{In}(s,t),\,\smop{In}(m,u),\, \smop{In}(r,v))} (t\circ_s u\circ_m v)^{K'}
            \end{eqnarray*}
            where $(t\circ_s u\circ_m v)^{K'}$ is obtained from $t$, $u$ and $v$ by merging the three vertices $s,m,r$ into a single one, and by ordering the branches arriving at this vertex according to the shuffle $K'$. On the other hand, we have
            \begin{eqnarray*}
            t \triangle_s (u \triangle_{m} v) &=& t\triangle_s\Big(\sum_{L\in\smop{Sh}(\smop{In}(m,u),\,\smop{In}(r,v))}u\circ_m^L v\Big)\\
            &=& \sum_{L\in\smop{Sh}(\smop{In}(m,u),\,\smop{In}(r,v))}\ \sum_{M\in\smop{Sh}(\smop{In}(s,t),\,\smop{In}(r, u\circ_m^L v))}t\circ_s^M (u\circ_m^L v)\\
            &=& \displaystyle \sum_{K'\in\smop{Sh}\big(\smop{In}(s,t),\,\smop{In}(m,u), \,\smop{In}(r,v)\big)} (t\circ_s u\circ_m v)^{K'}\\
            &=& (t \triangle_s u) \triangle_{m} v.
            \end{eqnarray*}
            \item If $m$ is different from the root of $u$,
        \begin{eqnarray*}
           & & (t \triangle_s u) \triangle_{m} v\\ &=& \Big( \displaystyle \sum_{H\in\smop{Sh}(\smop{In}(s,t),\, \smop{In}(r,u))} t \circ_s^H u \Big) \triangle_{m} v\\
           &=&\displaystyle \sum_{K\in\smop{Sh}(\smop{In}(m,u), \,\smop{In}(r,v))} \ \displaystyle \sum_{H\in\smop{Sh}(\smop{In}(s,t), \,\smop{In}(r,u))} \big((t \circ_s^H u)\circ_{m}^{K}v \big)\\
           &=&\displaystyle \sum_{H\in\smop{Sh}(\smop{In}(s,t), \,\smop{In}(r,u))}\  \displaystyle \sum_{K\in\smop{Sh}(\smop{In}(m,u), \,\smop{In}(r,v))}  \big(t \circ_s^H (u\circ_{m}^{K} v) \big)\\
           &=& t \triangle_s (u \triangle_{m}v).
        \end{eqnarray*}
        \end{itemize}
        \item Property \eqref{asso-suppl} is easily checked, by shuffling the branches above $s$ coming from the three trees considered.
    \end{itemize} 
\end{proof}

\newpage
  \section{\bf{Operad structure on $\mop{\sc Mag} \circ \tq$}}
  \noindent From the substitution formula
	$$\mop{\sc Mag} \circ \tq [I]=\displaystyle \bigoplus_{\pi \vdash I} \mop{\sc Mag} [\pi] \otimes \tq(\pi),$$
	one can see that a typical element in $\mop{\sc Mag} \circ \tq [I]$ is of the form $t \otimes \displaystyle \bigotimes_{C \in \pi}\gamma_C$, where $\pi \vdash I$ and $t \in \mop{Mag}[\pi]$, and where $\gamma_C\in \tq[C]$ for any block $C$ of $\pi$.
 
    \subsection{Diamond operation}
Let $I=S \sqcup T$. The partial composition
\[ \diamondsuit_{s}:\mop{\sc Mag} \circ \tq[S] \otimes \mop{\sc Mag} \circ \tq[T] \longrightarrow \mop{\sc Mag} \circ \tq[S \sqcup_s T] \]
is defined as follows, let $\pi \vdash S$, let $ \rho \vdash T$, let $s \in S$ and let $C_s$ be the block of $\pi$ which contains $s$, for $\bigg(t\otimes \displaystyle \bigotimes_{c\in \pi} \alpha_{c}\bigg) \in \mop{\sc Mag} \circ \tq[S]$ and $\bigg(u\otimes \displaystyle \bigotimes_{B\in \rho} \beta_{B}\bigg) \in \mop{\sc Mag} \circ \tq[T]$, we have
\begin{eqnarray*}
    &&\bigg(t\otimes \displaystyle \bigotimes_{c\in \pi} \alpha_{c}\bigg) \diamondsuit_{s} \bigg(u\otimes \displaystyle \bigotimes_{B\in \rho} \beta_{B}\bigg)\\
    &:=& \sum_{H \in \smop{sh}\big(\smop{In}(C_s,t),\,\smop{In}(B_r,u)\big)}(t \circ_{\smop{C}_s}^{\smop{H}} u) \otimes (\alpha_{c_s} \circ_s \beta_{\smop{B}_r}) \otimes 
   \bigg(\bigotimes_{C'\in \pi \setminus\{C_s\}} \alpha_{C'}\bigg)   \otimes \bigg( \bigotimes_{B'\in \rho \setminus_{\{B_r\}}} \beta_{B'}\bigg).
\end{eqnarray*}
    
\begin{example}

\

\scalebox{0.7}{
   \begin{tikzpicture}[x=0.75pt,y=0.75pt,yscale=-1,xscale=1]

\draw   (57.75,212.83) .. controls (57.75,205.85) and (64.72,200.19) .. (73.32,200.19) .. controls (81.91,200.19) and (88.88,205.85) .. (88.88,212.83) .. controls (88.88,219.82) and (81.91,225.48) .. (73.32,225.48) .. controls (64.72,225.48) and (57.75,219.82) .. (57.75,212.83) -- cycle ;
\draw    (73,180) -- (73,200) ;
\draw   (55.3,166.24) .. controls (55.3,158.34) and (63.18,151.93) .. (72.91,151.93) .. controls (82.63,151.93) and (90.52,158.34) .. (90.52,166.24) .. controls (90.52,174.15) and (82.63,180.55) .. (72.91,180.55) .. controls (63.18,180.55) and (55.3,174.15) .. (55.3,166.24) -- cycle ;
\draw    (86,158) -- (98,140) ;
\draw   (34,128.64) .. controls (34,121.65) and (40.97,115.99) .. (49.56,115.99) .. controls (58.16,115.99) and (65.13,121.65) .. (65.13,128.64) .. controls (65.13,135.62) and (58.16,141.28) .. (49.56,141.28) .. controls (40.97,141.28) and (34,135.62) .. (34,128.64) -- cycle ;
\draw   (83.97,128.64) .. controls (83.97,121.65) and (90.93,115.99) .. (99.53,115.99) .. controls (108.13,115.99) and (115.09,121.65) .. (115.09,128.64) .. controls (115.09,135.62) and (108.13,141.28) .. (99.53,141.28) .. controls (90.93,141.28) and (83.97,135.62) .. (83.97,128.64) -- cycle ;
\draw   (163.42,211.67) .. controls (163.42,204.78) and (170.3,199.19) .. (178.78,199.19) .. controls (187.26,199.19) and (194.14,204.78) .. (194.14,211.67) .. controls (194.14,218.56) and (187.26,224.15) .. (178.78,224.15) .. controls (170.3,224.15) and (163.42,218.56) .. (163.42,211.67) -- cycle ;
\draw   (163.63,126.81) .. controls (163.63,119.92) and (170.5,114.33) .. (178.98,114.33) .. controls (187.47,114.33) and (194.34,119.92) .. (194.34,126.81) .. controls (194.34,133.7) and (187.47,139.29) .. (178.98,139.29) .. controls (170.5,139.29) and (163.63,133.7) .. (163.63,126.81) -- cycle ;
\draw   (163.42,167.74) .. controls (163.42,160.85) and (170.3,155.26) .. (178.78,155.26) .. controls (187.26,155.26) and (194.14,160.85) .. (194.14,167.74) .. controls (194.14,174.63) and (187.26,180.22) .. (178.78,180.22) .. controls (170.3,180.22) and (163.42,174.63) .. (163.42,167.74) -- cycle ;
\draw    (178,138) -- (178,155) ;
\draw    (178,180) -- (178,199) ;
\draw   (275.76,212.07) .. controls (275.76,205.08) and (282.73,199.42) .. (291.32,199.42) .. controls (299.92,199.42) and (306.89,205.08) .. (306.89,212.07) .. controls (306.89,219.05) and (299.92,224.71) .. (291.32,224.71) .. controls (282.73,224.71) and (275.76,219.05) .. (275.76,212.07) -- cycle ;
\draw    (288,185) -- (288,200) ;
\draw   (251.97,166.06) .. controls (251.97,155.91) and (267.73,147.67) .. (287.17,147.67) .. controls (306.61,147.67) and (322.37,155.91) .. (322.37,166.06) .. controls (322.37,176.22) and (306.61,184.45) .. (287.17,184.45) .. controls (267.73,184.45) and (251.97,176.22) .. (251.97,166.06) -- cycle ;
\draw    (310,153) -- (340,133) ;
\draw    (241,137) -- (262,153) ;
\draw   (219.24,123.78) .. controls (219.24,116.51) and (228.08,110.62) .. (238.98,110.62) .. controls (249.88,110.62) and (258.72,116.51) .. (258.72,123.78) .. controls (258.72,131.05) and (249.88,136.95) .. (238.98,136.95) .. controls (228.08,136.95) and (219.24,131.05) .. (219.24,123.78) -- cycle ;
\draw   (332.31,124.18) .. controls (332.31,118.82) and (338.72,114.48) .. (346.63,114.48) .. controls (354.54,114.48) and (360.95,118.82) .. (360.95,124.18) .. controls (360.95,129.54) and (354.54,133.88) .. (346.63,133.88) .. controls (338.72,133.88) and (332.31,129.54) .. (332.31,124.18) -- cycle ;
\draw   (214.75,82.66) .. controls (214.75,74.75) and (221.07,68.34) .. (228.88,68.34) .. controls (236.69,68.34) and (243.01,74.75) .. (243.01,82.66) .. controls (243.01,90.57) and (236.69,96.99) .. (228.88,96.99) .. controls (221.07,96.99) and (214.75,90.57) .. (214.75,82.66) -- cycle ;
\draw   (266.43,104.64) .. controls (266.43,99.35) and (274.85,95.07) .. (285.24,95.07) .. controls (295.63,95.07) and (304.05,99.35) .. (304.05,104.64) .. controls (304.05,109.93) and (295.63,114.22) .. (285.24,114.22) .. controls (274.85,114.22) and (266.43,109.93) .. (266.43,104.64) -- cycle ;
\draw    (229,97) -- (229,112) ;
\draw    (284,114) -- (284,147) ;
\draw   (423.32,212.07) .. controls (423.32,205.08) and (430.29,199.42) .. (438.88,199.42) .. controls (447.48,199.42) and (454.45,205.08) .. (454.45,212.07) .. controls (454.45,219.05) and (447.48,224.71) .. (438.88,224.71) .. controls (430.29,224.71) and (423.32,219.05) .. (423.32,212.07) -- cycle ;
\draw    (435,184) -- (435,200) ;
\draw   (399.53,166.06) .. controls (399.53,155.91) and (415.29,147.67) .. (434.73,147.67) .. controls (454.17,147.67) and (469.93,155.91) .. (469.93,166.06) .. controls (469.93,176.22) and (454.17,184.45) .. (434.73,184.45) .. controls (415.29,184.45) and (399.53,176.22) .. (399.53,166.06) -- cycle ;
\draw    (458,151) -- (488,133) ;
\draw    (388,135) -- (409,153) ;
\draw   (366.8,127.24) .. controls (366.8,121.88) and (373.21,117.54) .. (381.12,117.54) .. controls (389.03,117.54) and (395.44,121.88) .. (395.44,127.24) .. controls (395.44,132.6) and (389.03,136.95) .. (381.12,136.95) .. controls (373.21,136.95) and (366.8,132.6) .. (366.8,127.24) -- cycle ;
\draw   (479.87,124.18) .. controls (479.87,118.82) and (486.28,114.48) .. (494.19,114.48) .. controls (502.1,114.48) and (508.51,118.82) .. (508.51,124.18) .. controls (508.51,129.54) and (502.1,133.88) .. (494.19,133.88) .. controls (486.28,133.88) and (479.87,129.54) .. (479.87,124.18) -- cycle ;
\draw   (418.25,60.82) .. controls (418.25,52.91) and (424.57,46.5) .. (432.38,46.5) .. controls (440.19,46.5) and (446.51,52.91) .. (446.51,60.82) .. controls (446.51,68.73) and (440.19,75.15) .. (432.38,75.15) .. controls (424.57,75.15) and (418.25,68.73) .. (418.25,60.82) -- cycle ;
\draw   (413.99,104.64) .. controls (413.99,99.35) and (422.41,95.07) .. (432.8,95.07) .. controls (443.19,95.07) and (451.61,99.35) .. (451.61,104.64) .. controls (451.61,109.93) and (443.19,114.22) .. (432.8,114.22) .. controls (422.41,114.22) and (413.99,109.93) .. (413.99,104.64) -- cycle ;
\draw    (432,75) -- (432,95) ;
\draw    (432,114) -- (432,147) ;
\draw   (572.81,213.02) .. controls (572.81,206.03) and (579.78,200.37) .. (588.37,200.37) .. controls (596.97,200.37) and (603.94,206.03) .. (603.94,213.02) .. controls (603.94,220) and (596.97,225.66) .. (588.37,225.66) .. controls (579.78,225.66) and (572.81,220) .. (572.81,213.02) -- cycle ;
\draw    (585,186) -- (585,200) ;
\draw   (549.02,167.01) .. controls (549.02,156.86) and (564.78,148.62) .. (584.22,148.62) .. controls (603.66,148.62) and (619.42,156.86) .. (619.42,167.01) .. controls (619.42,177.17) and (603.66,185.4) .. (584.22,185.4) .. controls (564.78,185.4) and (549.02,177.17) .. (549.02,167.01) -- cycle ;
\draw    (607,154) -- (637,136) ;
\draw    (538,136) -- (559,154) ;
\draw   (516.29,128.19) .. controls (516.29,122.83) and (522.7,118.49) .. (530.61,118.49) .. controls (538.52,118.49) and (544.93,122.83) .. (544.93,128.19) .. controls (544.93,133.55) and (538.52,137.9) .. (530.61,137.9) .. controls (522.7,137.9) and (516.29,133.55) .. (516.29,128.19) -- cycle ;
\draw   (623.28,125.16) .. controls (623.28,118.5) and (631.05,113.11) .. (640.64,113.11) .. controls (650.23,113.11) and (658,118.5) .. (658,125.16) .. controls (658,131.81) and (650.23,137.21) .. (640.64,137.21) .. controls (631.05,137.21) and (623.28,131.81) .. (623.28,125.16) -- cycle ;
\draw   (626.57,78.86) .. controls (626.57,70.95) and (632.9,64.54) .. (640.7,64.54) .. controls (648.51,64.54) and (654.83,70.95) .. (654.83,78.86) .. controls (654.83,86.78) and (648.51,93.19) .. (640.7,93.19) .. controls (632.9,93.19) and (626.57,86.78) .. (626.57,78.86) -- cycle ;
\draw   (563.48,105.59) .. controls (563.48,100.3) and (571.9,96.02) .. (582.29,96.02) .. controls (592.68,96.02) and (601.1,100.3) .. (601.1,105.59) .. controls (601.1,110.88) and (592.68,115.17) .. (582.29,115.17) .. controls (571.9,115.17) and (563.48,110.88) .. (563.48,105.59) -- cycle ;
\draw    (640.7,93.19) -- (640.51,113.11) ;
\draw    (582,115) -- (582,148) ;
\draw    (45,140) -- (58,158) ;

\draw (62,206) node [anchor=north west][inner sep=0.75pt]    {$\alpha _{C_1}$};
\draw (62,156) node [anchor=north west][inner sep=0.75pt]    {$\alpha _{C_2}$};
\draw (59,167) node [anchor=north west][inner sep=0.75pt]    {$\cdot \,\mathbf s$};
\draw (86,121) node [anchor=north west][inner sep=0.75pt]    {$\alpha _{C_4}$};
\draw (37,121) node [anchor=north west][inner sep=0.75pt]    {$\alpha _{C_3}$};
\draw (121,176) node [anchor=north west][inner sep=0.75pt]    {$\diamondsuit _{s}$};
\draw (167,201) node [anchor=north west][inner sep=0.75pt]    {$\beta _{B_a}$};
\draw (167,158) node [anchor=north west][inner sep=0.75pt]    {$\beta _{B_b}$};
\draw (167,116) node [anchor=north west][inner sep=0.75pt]    {$\beta _{B_c}$};
\draw (204,178) node [anchor=north west][inner sep=0.75pt]    {$=$};
\draw (280,205) node [anchor=north west][inner sep=0.75pt]    {$\alpha _{C_1}$};
\draw (251,155) node [anchor=north west][inner sep=0.75pt]   {$\alpha _{C_2} \circ _{s} \beta _{B_a}$};
\draw (270,98) node [anchor=north west][inner sep=0.75pt]    {$\alpha _{C_3}$};
\draw (334,117) node [anchor=north west][inner sep=0.75pt]    {$\alpha _{C_4}$};
\draw (226,115) node [anchor=north west][inner sep=0.75pt]    {$\beta _{B_b}$};
\draw (216,72) node [anchor=north west][inner sep=0.75pt]    {$\beta _{B_c}$};
\draw (427,205) node [anchor=north west][inner sep=0.75pt]    {$\alpha _{C_1}$};
\draw (399,155) node [anchor=north west][inner sep=0.75pt]   {$\alpha _{C_2} \circ _{s} \beta _{B_a}$};
\draw (368,120) node [anchor=north west][inner sep=0.75pt]    {$\alpha _{C_3}$};
\draw (481,117) node [anchor=north west][inner sep=0.75pt]    {$\alpha _{C_4}$};
\draw (421,94) node [anchor=north west][inner sep=0.75pt]    {$\beta _{B_b}$};
\draw (420,52) node [anchor=north west][inner sep=0.75pt]    {$\beta _{B_c}$};
\draw (576,205) node [anchor=north west][inner sep=0.75pt]    {$\alpha _{C_1}$};
\draw (548,156) node [anchor=north west][inner sep=0.75pt]   {$\alpha _{C_2} \circ _{s} \beta _{B_a}$};
\draw (518,121) node [anchor=north west][inner sep=0.75pt]    {$\alpha _{C_3}$};
\draw (571,98) node [anchor=north west][inner sep=0.75pt]    {$\alpha _{C_4}$};
\draw (628,116) node [anchor=north west][inner sep=0.75pt]    {$\beta _{B_b}$};
\draw (627.61,68.13) node [anchor=north west][inner sep=0.75pt]    {$\beta _{B_c}$};
\draw (349,175) node [anchor=north west][inner sep=0.75pt]    {$+$};
\draw (507,175) node [anchor=north west][inner sep=0.75pt]    {$+$};
\end{tikzpicture}
}
\end{example}

\begin{theorem}
    $(\mop{\sc Mag} \circ \tq, \diamondsuit)$ is an operad.
\end{theorem}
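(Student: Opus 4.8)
The plan is to mimic, almost line for line, the proof that $(\mop{NAP}\circ\tq,\square)$ is an operad, replacing everywhere the $\mop{NAP}$ partial composition $\circ$ by the shuffled magmatic composition $\triangle$. This is legitimate because the only properties of the outer operad used in that proof are its parallel and nested associativities together with property~\eqref{asso-suppl}, and $(\mop{\sc Mag},\triangle)$ possesses all three (the preceding theorem); the operad axioms for $\tq$ enter in exactly the same places.

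Before the case analysis I would record a reformulation of $\diamondsuit$ that makes the analogy with $\square$ explicit. A shuffle $H\in\mop{sh}\big(\mop{In}(C_s,t),\mop{In}(B_r,u)\big)$ only rearranges the left-to-right order of the branches arriving at the merged vertex of $t\circ_{C_s}^H u$; it does not change the underlying partition $\pi\sqcup_s\rho$. Since the tensor factors are indexed by the \emph{blocks} of that partition, the bubble tuple does not depend on $H$, and therefore
\[ \Bigl(t\otimes\bigotimes_{C\in\pi}\alpha_C\Bigr)\diamondsuit_s\Bigl(u\otimes\bigotimes_{B\in\rho}\beta_B\Bigr)=\bigl(t\triangle_{C_s}u\bigr)\otimes(\alpha_{C_s}\circ_s\beta_{B_r})\otimes\Bigl(\bigotimes_{C\in\pi\setminus\{C_s\}}\alpha_C\Bigr)\otimes\Bigl(\bigotimes_{B\in\rho\setminus\{B_r\}}\beta_B\Bigr). \]
Thus $\diamondsuit$ has the same shape as $\square$: it applies $\triangle$ in the tree slot, the operadic product of $\tq$ on the fused bubble, and the identity on the remaining bubbles. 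In particular each summand lies in $\mop{\sc Mag}[\pi\sqcup_s\rho]\otimes\tq(\pi\sqcup_s\rho)$, a summand of $\mop{\sc Mag}\circ\tq[S\sqcup_s T]$, so $\diamondsuit_s$ is well defined.

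The axioms are then checked by the usual case analysis, introducing a third element $v\otimes\bigotimes_{A\in\tau}\gamma_A$ and writing $A_{r'}$ for the block of $\tau$ at the root of $v$. For parallel associativity at distinct $s,s'\in S$ lying in blocks $C_s,C_{s'}$ of $\pi$: if $C_s\neq C_{s'}$ the identity $(x\diamondsuit_s y)\diamondsuit_{s'}z=(x\diamondsuit_{s'}z)\diamondsuit_s y$ follows from parallel associativity of $\triangle$, since the two insertions touch disjoint vertices and the two fused bubbles sit at disjoint positions; if $C_s=C_{s'}=:C$, then after $\diamondsuit_s$ the point $s'$ has been absorbed into the merged block, the tree slot becomes $(t\triangle_C u)\triangle_{\smop{root}(u)}v$, which by~\eqref{asso-suppl} for $\triangle$ equals $(t\triangle_C v)\triangle_{\smop{root}(v)}u$ up to the relabeling induced by $\varphi_{uv}$, while the bubble on the thrice-fused block, $(\alpha_C\circ_s\beta_{B_r})\circ_{s'}\gamma_{A_{r'}}$, equals $(\alpha_C\circ_{s'}\gamma_{A_{r'}})\circ_s\beta_{B_r}$ by parallel associativity of $\tq$; assembling the two gives the symmetry in $u$ and $v$. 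For nested associativity at $b\in T$ in block $B_b$ of $\rho$: if $B_b\neq B_r$ one uses nested associativity of $\triangle$ on the tree slot and the bubble $\beta_{B_b}\circ_b\gamma_{A_{r'}}$ is formed independently; if $B_b=B_r$, then $b$ lands in the fused block, nested associativity of $\triangle$ handles the tree slot, and nested associativity of $\tq$ gives $(\alpha_{C_s}\circ_s\beta_{B_r})\circ_b\gamma_{A_{r'}}=\alpha_{C_s}\circ_s(\beta_{B_r}\circ_b\gamma_{A_{r'}})$. Finally, naturality (N1)--(N2) and unitality (U1)--(U2) follow from the corresponding properties of $\triangle$ and of $\tq$, exactly as in the $\mop{NAP}$ case, using $(\mop{\sc Mag}\circ\tq)[\{x\}]=\tq[\{x\}]$ for singletons $\{x\}$.

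The point that genuinely requires care — the only one that is not a literal transcription of the $\mop{NAP}\circ\tq$ argument — is the bookkeeping of the shuffle sums in the two ``fused-block'' subcases. One must check that the iterated shuffle sums produced by two successive $\diamondsuit$'s reorganize, precisely as in the proof that $(\mop{\sc Mag},\triangle)$ is an operad, into a single sum over multi-shuffles of the relevant branch-sets, and that this common reindexing exhibits the two sides of each associativity identity. In the parallel subcase with $C_s=C_{s'}$ one must in addition carry the bijection $\varphi_{uv}$ of~\eqref{asso-suppl} through the relabeling of the fused bubble as well as of the tree, so that both sides are recognized as living in the same summand of $\bigoplus_{\chi}\mop{\sc Mag}[\chi]\otimes\tq(\chi)$, where $\chi$ is the partition of $(S\sqcup T\sqcup R)\setminus\{s,s'\}$ whose blocks are those of $\pi,\rho,\tau$ with $C_s,B_r,A_{r'}$ amalgamated. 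Once these identifications are made, every equality in the computation is forced by an operad axiom for $\triangle$ or for $\tq$, and the theorem follows.
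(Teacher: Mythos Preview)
Your proof is correct, and your initial reformulation is the right move: pulling the shuffle sum outside and recognizing that the bubble tensor is independent of $H$ lets you write
\[
x\diamondsuit_s y=(t\triangle_{C_s}u)\otimes(\alpha_{C_s}\circ_s\beta_{B_r})\otimes\Bigl(\bigotimes_{C\neq C_s}\alpha_C\Bigr)\otimes\Bigl(\bigotimes_{B\neq B_r}\beta_B\Bigr),
\]
after which the argument is literally the $\mop{NAP}\circ\tq$ proof with $\triangle$ in place of the $\mop{NAP}$ composition, invoking the operad axioms and property~\eqref{asso-suppl} for $(\mop{\sc Mag},\triangle)$ as black boxes.

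The paper does not make this repackaging explicit. Instead it keeps the shuffle sums visible throughout and, in each subcase, reorganizes the iterated sums by hand, in effect re-deriving the relevant pieces of the $(\mop{\sc Mag},\triangle)$ associativity computations inside the $\diamondsuit$ argument. In the delicate $C_s=C_{s'}$ parallel subcase the paper cites ``nested associativity of $\mop{\sc Mag}$'' where what is really being used is the triple-shuffle reindexing together with the root-swap symmetry, i.e.\ exactly property~\eqref{asso-suppl} for $\triangle$; you are more precise on this point. The mathematical content is the same, but your route is shorter and makes the parallelism with Theorem~4.1 transparent: $\diamondsuit$ is to $\triangle$ what $\square$ is to the $\mop{NAP}$ composition, so once $(\mop{\sc Mag},\triangle)$ is known to be an operad satisfying~\eqref{asso-suppl} there is nothing new to check.
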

\begin{proof} 
    \item Associativity: let $I=S\sqcup T\sqcup R$,  let $\pi \vdash S$, $ \rho \vdash T$ and $\tau \vdash R$,  let $s,s' \in S$ and let $t \in T$. Let $C_{s}$, $C_{s'}$ be the blocks of $\pi$ which contain respectively $s$ and $s'$, and let $B_t$ be the block of $\rho$ which contains $t$.

            \begin{itemize}
		      \item Parallel associativity: two subcases must be considered. 
        
		      \noindent - If $C_{s}=C_{s'}$, we have:
        \vspace{-.05in}
		      \begin{eqnarray*}
			& &\left(\bigg( t \otimes \displaystyle \bigotimes_{C \in \pi} \alpha_{\smop{C}} \bigg) \diamondsuit_{s} \bigg( u \otimes \displaystyle \bigotimes_{B \in \rho} \beta_{\smop{B}} \bigg)\right) \diamondsuit_{s'} \Bigg( v \otimes \bigotimes_{A\in \tau} \gamma_{\smop{A}} \Bigg)\\
                &=& \left[\sum_{H \in \smop{sh}\big(\smop{In}(C_s,t),\,\smop{In}(B_r,u)\big)}(t \circ_{\smop{C}_s}^{\smop{H}} u) \otimes (\alpha_{\smop{C}_s} \circ_s \beta_{\smop{B}_r}) \otimes 
   \bigg(\bigotimes_{C'\in \pi \setminus\{C_s\}} \alpha_{\smop{C}'}\bigg)   \otimes \bigg( \bigotimes_{B'\in \rho \setminus_{\{B_r\}}} \beta_{\smop{B}'}\bigg)\right]\\
   &&\diamondsuit_{s'} \Bigg( v \otimes \bigotimes_{A\in \tau} \gamma_{\smop{A}} \Bigg)\\
   &=&\sum_{K \in \smop{sh}\big(\smop{In}(B_r,\,t \circ_{\smop{C}_s}^{\smop{H}} u),\,\smop{In}(B_r,u)\big)} \ \sum_{H \in \smop{sh}\big(\smop{In}(C_s,t),\,\smop{In}(B_r,u)\big)}\Big((t\circ_{\smop{C}_{s}}^{\smop{H}}u)\circ_{\smop{B}_{r}}^{\smop{K}} v \Big) \otimes \Big(\bigotimes_{C \in  \pi\setminus C_{s}}\alpha_{c}\Big) \\
   &&\otimes \Big((\alpha_{\smop{C}_{s}} \circ_s \beta_{\smop{B}_r})\circ_{s'} \gamma_{A_r}\Big)\otimes \Big(\bigotimes_{C \in  \rho \setminus_{B_r}}\beta_{\smop{B}}\Big) \otimes \Big(\bigotimes_{A\in \tau \setminus A_{r}} \gamma_{\smop{A}}\Big)\\
   &&\hbox{(using the fact that $s'$ is an element of the new block $C_s\sqcup_s B_r$)}\\
			&=&\sum_{H \in \smop{sh}\big(\smop{In}(C_s,t),\,\smop{In}(A_r,u\circ_{\smop{B}_{r}}^{\smop{K}} v)\big)} \ \sum_{K \in \smop{sh}\big(\smop{In}(B_r,u),\,\smop{In}(A_r,v)\big)}\Big(t\circ_{\smop{C}_{s}}^{\smop{H}}(u\circ_{\smop{B}_{r}}^{\smop{K}} v) \Big) \otimes \Big(\bigotimes_{C \in  \pi\setminus C_{s}}\alpha_{\smop{C}}\Big) \\
            & & \otimes \Big((\alpha_{\smop{C}_{s}} \circ_{s'} \gamma_{\smop{A}_r} )\circ_s \beta_{\smop{B}_r}\Big)\otimes \Big(\bigotimes_{C \in  \rho \setminus {B_r}}\beta_{\smop{B}}\Big) \otimes \Big(\bigotimes_{A\in \tau \setminus A_{r}} \gamma_{\smop{A}}\Big) \\
			& & \text{(via nested associativity of $\mop{\sc Mag}$ and parallel associativity of q)}\\
			&=&\left(\bigg( t \otimes \displaystyle \bigotimes_{C \in \pi} \alpha_{\smop{C}} \bigg) \diamondsuit_{s'} \bigg( v \otimes \bigotimes_{A\in \tau} \gamma_{\smop{A}} \bigg)\right) \diamondsuit_{s} \Bigg( u \otimes \displaystyle \bigotimes_{B \in \rho} \beta_{\smop{B}} \Bigg).
                \end{eqnarray*}
                \noindent - If $ C_s \ne  C_{s'}$, we have:
			\begin{eqnarray*}
				& &\left(\bigg( t \otimes \displaystyle \bigotimes_{C \in \pi} \alpha_{\smop{C}} \bigg) \diamondsuit_{s} \bigg( u \otimes \displaystyle \bigotimes_{B \in \rho} \beta_{\smop{B}} \bigg)\right) \diamondsuit_{s'} \Bigg( v \otimes \bigotimes_{A\in \tau} \gamma_{\smop{A}} \Bigg)\\
                &=& \left[\sum_{H \in \smop{sh}\big(\smop{In}(C_s,t),\,\smop{In}(B_r,u)\big)}(t \circ_{\smop{C}_s}^{\smop{H}} u) \otimes (\alpha_{\smop{C}_s} \circ_s \beta_{\smop{B}_r}) \otimes 
                \bigg(\bigotimes_{C'\in \pi \setminus\{C_s\}} \alpha_{\smop{C}'}\bigg)   \otimes \bigg( \bigotimes_{B'\in \rho \setminus_{\{B_r\}}} \beta_{\smop{B}'}\bigg)\right]\\
                &&\diamondsuit_{s'} \Bigg( v \otimes \bigotimes_{A\in \tau} \gamma_{\smop{A}} \Bigg)\\
                &=& \sum_{H' \in \smop{sh}\big(\smop{In}(C_s',\,t \circ_{\smop{C}_s}^{\smop{H}} u),\,\smop{In}(A_r,v)\big)} \ \sum_{H \in \smop{sh}\big(\smop{In}(C_s,t),\,\smop{In}(B_r,u)\big)}\bigg((t \circ_{\smop{C}_s}^{\smop{H}} u)\circ_{\smop{C}_{s'}}^{\smop{H}'} v \bigg) \otimes (\alpha_{\smop{C}_s} \circ_s \beta_{\smop{B}_r})\\
                &&\otimes (\alpha_{\smop{C}_{s'}} \circ_{s'} \gamma_{\smop{A}_r}) \otimes \bigg(\bigotimes_{C'\in \pi \setminus\{C_s,C_{s'} \}} \alpha_{\smop{C}'}\bigg) \otimes \bigg( \bigotimes_{B'\in \rho \setminus_{\{B_r\}}} \beta_{\smop{B}'}\bigg) \Bigg( v \otimes \bigotimes_{A\in \tau} \gamma_{\smop{A}} \Bigg)\\
                &=&  \sum_{H \in \smop{sh}\big(\smop{In}(C_s,\,t\circ_{\smop{C}_{s'}}^{\smop{H}'} v),\,\smop{In}(B_r,u)\big)}\ \sum_{H' \in \smop{sh}\big(\smop{In}(C_s',t ),\,\smop{In}(A_r,v)\big)} \bigg((t\circ_{\smop{C}_{s'}}^{\smop{H}'} v )\circ_{\smop{C}_s}^{\smop{H}} u \bigg) \otimes (\alpha_{\smop{C}_s} \circ_s \beta_{\smop{B}_r})\\
                &&\otimes (\alpha_{\smop{C}_{s'}} \circ_{s'} \gamma_{\smop{A}_r}) \otimes \bigg(\bigotimes_{C'\in \pi \setminus\{C_s,C_{s'} \}} \alpha_{\smop{C}'}\bigg) \otimes \bigg( \bigotimes_{B'\in \rho \setminus_{\{B_r\}}} \beta_{\smop{B}'}\bigg) \Bigg( v \otimes \bigotimes_{A\in \tau} \gamma_{\smop{A}} \Bigg)\\
                & & \text{(via parallel associativity of $\mop{\sc Mag }$)}\\
			&=&\left(\bigg( t \otimes \displaystyle \bigotimes_{C \in \pi} \alpha_{\smop{C}} \bigg) \diamondsuit_{s'} \bigg( v \otimes \bigotimes_{A\in \tau} \gamma_{\smop{A}} \bigg)\right) \diamondsuit_{s} \Bigg( u \otimes \displaystyle \bigotimes_{B \in \rho} \beta_{\smop{B}} \Bigg).
                \end{eqnarray*}
                \item Nested associativity: Two subcases again arise.\\
		\noindent - If $t \in B_{r}$, we have:
            \begin{eqnarray*}
			& &\left(\bigg( t \otimes \displaystyle \bigotimes_{C \in \pi} \alpha_{\smop{C}} \bigg) \diamondsuit_{s} \bigg( u \otimes \displaystyle \bigotimes_{B \in \rho} \beta_{\smop{B}} \bigg)\right) \diamondsuit_{t} \Bigg( v \otimes \bigotimes_{A\in \tau} \gamma_{\smop{A}} \Bigg)\\
                &=& \left[\sum_{H \in \smop{sh}\big(\smop{In}(C_s,t),\,\smop{In}(B_r,u)\big)}(t \circ_{\smop{C}_s}^{\smop{H}} u) \otimes (\alpha_{\smop{C}_s} \circ_s \beta_{\smop{B}_r}) \otimes 
            \bigg(\bigotimes_{C'\in \pi \setminus\{C_s\}} \alpha_{\smop{C}'}\bigg)   \otimes \bigg( \bigotimes_{B'\in \rho  \setminus_{\{B_r\}}} \beta_{\smop{B}'}\bigg)\right]\\
            &&\diamondsuit_{t} \Bigg( v \otimes \bigotimes_{A\in \tau} \gamma_{\smop{A}} \Bigg)
            \end{eqnarray*}
            then,
            \begin{eqnarray*}
            & &\left(\bigg( t \otimes \displaystyle \bigotimes_{C \in \pi} \alpha_{\smop{C}} \bigg) \diamondsuit_{s} \bigg( u \otimes \displaystyle \bigotimes_{B \in \rho} \beta_{\smop{B}} \bigg)\right) \diamondsuit_{t} \Bigg( v \otimes \bigotimes_{A\in \tau} \gamma_{\smop{A}} \Bigg)\\
            &=&\sum_{L \in \smop{sh}\big(\smop{In}(B_r, \,t\circ_{\smop{C}_{s}}^{\smop{H}}u),\,\smop{In}(A_r,v)\big)} \ \sum_{H \in \smop{sh}\big(\smop{In}(C_s,t),\,\smop{In}(B_r,u)\big)}\Big((t\circ_{\smop{C}_{s}}^{\smop{H}}u)\circ_{\smop{B}_{r}}^{\smop{L}} v \Big) \otimes \Big(\bigotimes_{C \in  \pi\setminus C_{s}}\alpha_{c}\Big) \\
            &&\otimes \Big((\alpha_{\smop{C}_{s}} \circ_s \beta_{\smop{B}_r})\circ_{t} \gamma_{A_r}\Big)\otimes \Big(\bigotimes_{C \in  \rho \setminus_{B_r}}\beta_{\smop{B}}\Big) \otimes \Big(\bigotimes_{A\in \tau \setminus A_{r}} \gamma_{\smop{A}}\Big)\\
            &=&\sum_{H \in \smop{sh}\big(\smop{In}(C_s,t),\,\smop{In}(A_r,\,u\circ_{\smop{B}_{r}}^{\smop{L}} v )\big)} \ \sum_{L \in \smop{sh}\big(\smop{In}(B_r, u),\,\smop{In}(A_r,v)\big)} \Big(t\circ_{\smop{C}_{s}}^{\smop{H}}(u\circ_{\smop{B}_{r}}^{\smop{L}} v )\Big) \otimes \Big(\bigotimes_{C \in  \pi\setminus C_{s}}\alpha_{c}\Big) \\
        &&\otimes \Big(\alpha_{\smop{C}_{s}} \circ_s (\beta_{\smop{B}_r}\circ_{t} \gamma_{A_r})\Big)\otimes \Big(\bigotimes_{C \in  \rho \setminus_{B_r}}\beta_{\smop{B}}\Big) \otimes \Big(\bigotimes_{A\in \tau \setminus A_{r}} \gamma_{\smop{A}}\Big)\\
        & & \text{(via nested associativity of $\mop{\sc Mag}$ and q)}\\
        &=&\bigg( t \otimes \displaystyle \bigotimes_{C \in \pi} \alpha_{\smop{C}} \bigg) \diamondsuit_{s} \left(\bigg( u \otimes \displaystyle \bigotimes_{B \in \rho} \beta_{\smop{B}} \bigg) \diamondsuit_{t} \Bigg( v \otimes \bigotimes_{A\in \tau} \gamma_{\smop{A}} \Bigg)\right)\\
             \end{eqnarray*}
             \noindent - If $t \in B_{t} \ne B_{r}$, we have:
            \begin{eqnarray*}
			& &\left(\bigg( t \otimes \displaystyle \bigotimes_{C \in \pi} \alpha_{\smop{C}} \bigg) \diamondsuit_{s} \bigg( u \otimes \displaystyle \bigotimes_{B \in \rho} \beta_{\smop{B}} \bigg)\right) \diamondsuit_{t} \Bigg( v \otimes \bigotimes_{A\in \tau} \gamma_{\smop{A}} \Bigg)\\
             &=& \left[\sum_{H \in \smop{sh}\big(\smop{In}(C_s,t),\,\smop{In}(B_r,u)\big)}(t \circ_{\smop{C}_s}^{\smop{H}} u) \otimes (\alpha_{\smop{C}_s} \circ_s \beta_{\smop{B}_r}) \otimes 
            \bigg(\bigotimes_{C'\in \pi \setminus\{C_s\}} \alpha_{\smop{C}'}\bigg)   \otimes \bigg( \bigotimes_{B'\in \rho  \setminus_{\{B_r\}}} \beta_{\smop{B}'}\bigg)\right]\\
            &&\diamondsuit_{t} \Bigg( v \otimes \bigotimes_{A\in \tau} \gamma_{\smop{A}} \Bigg)\\
            &=&\sum_{L \in \smop{sh}\big(\smop{In}(B_t, \,t\circ_{\smop{C}_{s}}^{\smop{H}}u),\smop{In}(A_r,v)\big)} \ \sum_{H \in \smop{sh}\big(\smop{In}(C_s,t),\,\smop{In}(B_r,u)\big)}\Big((t\circ_{\smop{C}_{s}}^{\smop{H}}u)\circ_{\smop{B}_{t}}^{\smop{L}} v \Big) \otimes \Big(\bigotimes_{C \in  \pi\setminus C_{s}}\alpha_{c}\Big) \\
            &&\otimes (\alpha_{\smop{C}_{s}} \circ_s \beta_{\smop{B}_r}) \otimes (\beta_{\smop{B}_t} \circ_{t} \gamma_{A_r})\otimes \Big(\bigotimes_{C \in  \rho \setminus{\{B_r, B_t  \}}}\beta_{\smop{B}}\Big) \otimes \Big(\bigotimes_{A\in \tau \setminus A_{r}} \gamma_{\smop{A}}\Big)
            \end{eqnarray*}
            then 
            \begin{eqnarray*}
            & &\left(\bigg( t \otimes \displaystyle \bigotimes_{C \in \pi} \alpha_{\smop{C}} \bigg) \diamondsuit_{s} \bigg( u \otimes \displaystyle \bigotimes_{B \in \rho} \beta_{\smop{B}} \bigg)\right) \diamondsuit_{t} \Bigg( v \otimes \bigotimes_{A\in \tau} \gamma_{\smop{A}} \Bigg)\\
            &=&\sum_{H \in \smop{sh}\big(\smop{In}(C_s,t),\,\smop{In}(A_r,u\circ_{\smop{B}_{t}}^{\smop{L}} v )\big)} \ \sum_{L \in \smop{sh}\big(\smop{In}(B_t, u),\,\smop{In}(A_r,v)\big)} \Big(t\circ_{\smop{C}_{s}}^{\smop{H}}(u\circ_{\smop{B}_{t}}^{\smop{L}} v )\Big) \otimes \Big(\bigotimes_{C \in  \pi\setminus C_{s}}\alpha_{c}\Big) \\
        &&\otimes (\alpha_{\smop{C}_{s}} \circ_s \beta_{\smop{B}_r}) \otimes (\beta_{\smop{B}_t} \circ_{t} \gamma_{A_r})\otimes \Big(\bigotimes_{C \in  \rho \setminus\{B_r,B_t\}}\beta_{\smop{B}}\Big) \otimes \Big(\bigotimes_{A\in \tau \setminus A_{r}} \gamma_{\smop{A}}\Big)\\
        & & \text{(via nested associativity of $\mop{\sc Mag}$)}\\
        &=&\bigg( t \otimes \displaystyle \bigotimes_{C \in \pi} \alpha_{\smop{C}} \bigg) \diamondsuit_{s} \left(\bigg( u \otimes \displaystyle \bigotimes_{B \in \rho} \beta_{\smop{B}} \bigg) \diamondsuit_{t} \Bigg( v \otimes \bigotimes_{A\in \tau} \gamma_{\smop{A}} \Bigg)\right)\\
        \end{eqnarray*}
            \end{itemize}
\end{proof}

 \end{document}